\numberwithin{equation}{section}
\newcommand{\NN}{\mathbb N}
\newcommand{\ZZ}{\mathbb Z}
\newcommand{\FF}{\mathbb F}
\newcommand{\CC}{\mathbb C}
\DeclareMathOperator{\rank}{rank}
\DeclareMathOperator{\Tr}{Tr}
\DeclareMathOperator{\Hom}{Hom}
\DeclareMathOperator{\Hamming}{Hamming}
\DeclareMathOperator{\HS}{HS}
\DeclareMathOperator{\U}{U}
\DeclareMathOperator{\F}{F}
\DeclareMathOperator{\T}{(T)}
\DeclareMathOperator{\TFD}{(T;FD)}
\DeclareMathOperator{\ptau}{(\tau)}
\DeclareMathOperator{\SL}{SL}
\DeclareMathOperator{\Sp}{Sp}
\DeclareMathOperator{\spn}{span}
\DeclareMathOperator{\End}{End}
\DeclareMathOperator{\Aut}{Aut}
\DeclareMathOperator{\Out}{Out}
\DeclareMathOperator{\MCG}{MCG}
\DeclareMathOperator{\Sym}{Sym}
\DeclareMathOperator{\Stab}{Stab}
\DeclareMathOperator{\Ker}{Ker}
\DeclareMathOperator{\GL}{GL}
\DeclareMathOperator{\BS}{BS}
\DeclareMathOperator{\flex}{flex}
\DeclareMathOperator{\veryflex}{very-flex}
\DeclareMathOperator{\St}{St}
\DeclareMathOperator{\chara}{char}
\DeclareMathOperator{\conj}{Conj}
\newcommand{\frakp}{\mathfrak p}
\newcommand{\calG}{\mathcal G}
\newcommand{\calB}{\mathcal B}
\newcommand{\calP}{\mathcal P}
\newcommand{\calH}{\mathcal H}
\newcommand{\calO}{\mathcal O}
\newtheorem{stn}{Sætning}[section]
\newtheorem{cor}[stn]{Corollary}
\newtheorem{prop}[stn]{Proposition}
\theoremstyle{definition}
\theoremstyle{remark}\newtheorem{rem}[stn]{Remark}
\theoremstyle{definition}\newtheorem{defn}[stn]{Definition}
\newtheorem{lem}[stn]{Lemma}
\newtheorem{thm}[stn]{Theorem}
\title[Group stability and Property (T)]{Group stability and Property (T)}
\author[O.\ Becker]{Oren Becker}
\address{O.B., Einstein Institute of Mathematics, Hebrew University}
\email{oren.becker@mail.huji.ac.il}
\author[A.\ Lubotzky]{Alexander Lubotzky}
\address{A.L., Einstein Institute of Mathematics, Hebrew University}
\email{alex.lubotzky@mail.huji.ac.il}
\begin{document}
\begin{abstract}
In recent years, there has been a considerable amount of interest
in the stability of a finitely-generated group $\Gamma$ with respect
to a sequence of groups $\left\{G_{n}\right\}_{n=1}^{\infty}$, equipped with
bi-invariant metrics $\left\{d_{n}\right\}_{n=1}^{\infty}$.
We consider the case $G_{n}=\U\left(n\right)$ (resp.
$G_{n}=\Sym\left(n\right)$), equipped with the normalized Hilbert-Schmidt
metric $d_{n}^{\HS}$ (resp. the normalized Hamming metric $d_{n}^{\Hamming}$).
Our main result is that if $\Gamma$ is infinite, hyperlinear (resp. sofic) and has Property $\T$, then it is not stable with respect to
$\left(\U\left(n\right),d_{n}^{\HS}\right)$
(resp. $\left(\Sym\left(n\right),d_{n}^{\Hamming}\right)$).

This answers a question of Hadwin and Shulman regarding the 
stability of $\SL_{3}\left(\ZZ\right)$.
We also deduce that the mapping class group $\MCG\left(g\right)$, $g\geq 3$, and $\Aut\left(\FF_n\right)$, $n\geq 3$, are not stable with respect to $\left(\Sym\left(n\right),d_{n}^{\Hamming}\right)$.

Our main result exhibits a difference between stability
with respect to the normalized Hilbert-Schmidt metric on
$\U\left(n\right)$ and the (unnormalized) $p$-Schatten metrics,
since many groups with Property $\T$ are stable with respect to the
latter metrics, as shown by De Chiffre-Glebsky-Lubotzky-Thom and
Lubotzky-Oppenheim.

We suggest a more flexible notion of stability that may repair this
deficiency of stability with respect to
$\left(\U\left(n\right),d_{n}^{\HS}\right)$
and $\left(\Sym\left(n\right),d_{n}^{\Hamming}\right)$.
\end{abstract}

\maketitle
\section{Introduction}

Let $\calG=\left\{ \left(G_{n},d_{n}\right)\right\} _{n=1}^{\infty}$
be a family of groups $G_{n}$ endowed with bi-invariant metrics $d_{n}$,
i.e., $d_{n}\left(ag_{1}b,ag_{2}b\right)=d_{n}\left(g_{1},g_{2}\right)$
for all $g_{1},g_{2},a,b\in G_{n}$. Here are some examples:
\begin{enumerate}
\item $\calP=\left\{ \left(\Sym\left(n\right),d_{n}^{\Hamming}\right)\right\} _{n=1}^{\infty}$,
where $d_{n}^{\Hamming}$ is the normalized Hamming metric on $\Sym\left(n\right)$:
for $\sigma,\tau\in\Sym\left(n\right)$,
\[
d_{n}^{\Hamming}\left(\sigma,\tau\right)=\frac{1}{n}\cdot\left|\left\{ x\in\left[n\right]\mid\sigma\left(x\right)\neq\tau\left(x\right)\right\} \right|\,\,,
\]
where $\left[n\right]=\left\{ 1,\dotsc,n\right\} $.
\item $\HS=\left\{ \left(\U\left(n\right),d_{n}^{\HS}\right)\right\} _{n=1}^{\infty}$,
where $d_{n}^{\HS}$ is the normalized Hilbert-Schmidt metric: for
$A,B\in\U\left(n\right)$,
\[
d_{n}^{\HS}\left(A,B\right)=\|A-B\|_{\HS}\,\,\text{,}\,\,\,\,\,\,\,\,\,\,\,\,\text{where }\|T\|_{\HS}=\left(\Tr\left(\frac{1}{n}\cdot T^{*}T\right)\right)^{1/2}\,\,\text{.}
\]
\item $\calG^{\left(p\right)}=\left\{ \left(\U\left(n\right),d_{n}^{\left(p\right)}\right)\right\} _{n=1}^{\infty}$,
for any fixed $1\leq p<\infty$, where $d_{n}^{\left(p\right)}$is
the Schatten $p$-norm (see \cite{bhatia}, Section IV.2): for $A,B\in\U\left(n\right)$,
\[
d_{n}^{\left(p\right)}\left(A,B\right)=\|A-B\|_{p}\,\,\text{,}\,\,\,\,\,\,\,\,\,\,\,\,\text{where }\|T\|_{p}=\left(\Tr\left(\left(T^{*}T\right)^{p/2}\right)\right)^{1/p}\,\,\text{.}
\]
The case $p=2$ is of special interest: this is the standard $L^{2}$-norm,
a.k.a. the Frobenius norm, denoted $\|\cdot\|_{\F}$. Note that $d_{n}^{\left(2\right)}=\sqrt{n}\cdot d_{n}^{\HS}$.
The proofs in Section \ref{sec:proofs} also make use of the Frobenius
norm $\|\cdot\|_{\F}$ for non-square matrices, which is defined by
the same formula: $\|T\|_{\F}=\Tr\left(T^{*}T\right)^{1/2}$.
\item $\calG^{\left(\infty\right)}=\left\{ \left(\U\left(n\right),d_{n}^{\left(\infty\right)}\right)\right\} _{n=1}^{\infty}$,
where the metric $d_{n}^{\left(\infty\right)}$ is defined for $A,B\in\U\left(n\right)$
by $d_{n}^{\left(\infty\right)}\left(A,B\right)=\|A-B\|_{\infty}$,
where $\|\cdot\|_{\infty}$ is the operator norm.
\end{enumerate}
Let $\FF$ be a free group on a finite set $S$. Let $\Gamma$ be
a quotient of $\FF$, and denote the quotient map by $\pi:\FF\rightarrow\Gamma$.
The objects $S$, $\FF$, $\Gamma$ and $\pi$ shall remain fixed throughout this paper.

From now on, for a group $G$, a function $f:S\rightarrow G$ and
an element $w\in\FF$, we write $f\left(w\right)$ for the element
of $G$ resulting from applying the substitution $s\mapsto f\left(s\right)$
to the word $w$.
\begin{defn}
~\renewcommand{\labelenumi}{\roman{enumi})}
\begin{enumerate}
\item A \emph{$\calG$-stability-challenge} for $\Gamma$ is a sequence
$\left(f_{k}\right)_{k=1}^{\infty}$ of functions $f_{k}:S\rightarrow G_{n_{k}}$,
$n_k\in\NN$, such that for every $w\in\Ker\left(\pi\right)$,
\[
d_{n_{k}}\left(f_{k}\left(w\right),{\bf 1}_{G_{n_{k}}}\right)\overset{k\rightarrow\infty}{\longrightarrow}0\,\,\text{.}
\]
\item Let $\left(f_{k}\right)_{k=1}^{\infty}$ be a $\calG$-stability-challenge
for $\Gamma$. A \emph{solution} for $\left(f_{k}\right)_{k=1}^{\infty}$
is a sequence of functions $\left(g_{k}\right)_{k=1}^{\infty}$,
$g_{k}:S\rightarrow G_{n_{k}}$, such that for every $w\in\Ker\left(\pi\right)$,
$g_{k}\left(w\right)={\bf 1}_{G_{n_{k}}}$ (i.e., $g_{k}$ defines a homomorphism $\Gamma\rightarrow G_{n_k}$),
and
\[
\sum_{s\in S}d_{n_{k}}\left(f_{k}\left(s\right),g_{k}\left(s\right)\right)\overset{k\rightarrow\infty}{\longrightarrow}0\,\,\text{.}
\]
\item The group $\Gamma$ is \emph{$\calG$-stable} if every $\calG$-stability-challenge
for $\Gamma$ has a solution.
\end{enumerate}
\end{defn}
While the above definition of a $\calG$-stable group made use of
a given presentation of $\Gamma$ as a quotient of a free group, it
is in fact a group property, independent of the specific presentation
(cf. \cite{arzhantseva-paunescu}).

In recent years, there has been a considerable amount of interest
in ``group stability'' (see \cite{arzhantseva-paunescu}, \cite{thom-icm},
\cite{blt} and the references therein).
One of the main motivations is the study of $\calG$-approximations
of $\Gamma$:
\begin{defn}
For $\calG$ and $\Gamma$ as above, we say that $\Gamma$ is \emph{$\calG$-approximated}
if there is a sequence of integers
$\{n_{k}\}_{k=1}^{\infty}$,
and a sequence
$\left(\varphi_{k}\right)_{k=1}^{\infty}$
of functions $\varphi_{k}:\Gamma\rightarrow G_{n_{k}}$, such that
\[
\forall g,h\in\Gamma\,\,\,\,\,\,\,\,\lim_{k\rightarrow\infty}d_{n_{k}}\left(\varphi_{k}\left(gh\right),\varphi_{k}\left(g\right)\varphi_{k}\left(h\right)\right)=0
\]
and
\[
\forall 1_{\Gamma}\neq g\in\Gamma\,\,\,\,\,\,\,\,
\limsup_{k} d_{n_{k}}\left(\varphi_{k}\left(g\right),1_{G_{n_{k}}}\right)>0
\]
\end{defn}

In classical terminology, $\calP$-approximated groups (for $\calP$
as in Example 1 above) are called \emph{sofic} groups, and $\HS$-approximated
groups are called \emph{hyperlinear} groups
(or \emph{Connes embeddable} \cite{thom-icm}).
It is a well-known open problem,
due to Gromov (resp. Connes), whether every group is sofic (resp. hyperlinear).
Note that all sofic groups are hyperlinear.

In \cite{dglt}, it was shown for the first time that there are finitely
presented groups $\Gamma$ which are not $\left(\U\left(n\right),d_{n}^{\left(2\right)}\right)$-approximated
(i.e., \emph{Frobenius-approximated}), and this result was extended
in \cite{lubotzky-oppenheim} to all $1<p<\infty$. The groups $\Gamma$
in those papers are finite central extensions of suitable lattices
$\bar{\Gamma}$ in simple Lie groups of rank $r\geq3$ over local
non-archimedean fields. The key point there is that these groups $\Gamma$
and $\bar{\Gamma}$ are $\calG^{\left(2\right)}$-stable (and even $\calG^{\left(p\right)}$-stable). This is
proved as a corollary to the vanishing result $H^{i}\left(\Gamma,V\right)=0$
for every $i=1,\dotsc,r-1$ and for all actions of $\Gamma$ on Hilbert
spaces $V$ (and the same for many Banach spaces). The case $i=2$
gives the stability of $\Gamma$. Vanishing for $i=1$
is equivalent to $\Gamma$ having Property $\T$, so all the groups
treated there have Kazhdan's Property (T).

The goal of the present paper is to show that these examples are neither
$\calP$-stable nor $\HS$-stable. In fact, we prove a much more general
result, which is of independent interest:
\begin{thm}
\label{thm:intro-sofic-hyperlinear}~\renewcommand{\labelenumi}{\roman{enumi})}
\begin{enumerate}
\item If $\Gamma$ is sofic and has Property $\T$, then it is not $\calP$-stable,
unless it is finite.
\item If $\Gamma$ is hyperlinear and has Property $\T$, then it is not
$\HS$-stable, unless it is finite.
\end{enumerate}
\end{thm}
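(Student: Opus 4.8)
The plan is to treat the two cases in parallel, through the following dictionary. To a finite action $\rho\colon\Gamma\to\Sym(n)$ I attach its \emph{stabilizer distribution}, the invariant random subgroup $\mu_\rho\in\IRS(\Gamma)$ obtained by choosing $x\in[n]$ uniformly and recording $\Stab_\rho(x)\in\Sub(\Gamma)$; to a finite-dimensional unitary representation $\rho\colon\Gamma\to\U(n)$ I attach its normalized character $\tau_\rho(g)=\frac1n\Tr\rho(g)$, a trace on $\Gamma$. I would first establish a characterization of stability in these terms, in the spirit of the invariant-random-subgroup approach to permutation stability: $\Gamma$ is $\calP$-stable if and only if every \emph{sofic} IRS — a weak-$*$ limit of stabilizer distributions $\mu_{f_k}$ of almost-actions $f_k\colon\Gamma\to\Sym(n_k)$ — is \emph{co-sofic}, i.e.\ a weak-$*$ limit of stabilizer distributions of \emph{genuine} finite actions (equivalently, approximable by IRS supported on finite-index subgroups); and an analogous statement for $\HS$-stability, with "trace realizable by almost-homomorphisms" in place of sofic IRS and "limit of normalized characters of genuine finite-dimensional representations" in place of co-sofic. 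Granting such a characterization, proving non-stability reduces to exhibiting a sofic IRS (resp.\ an almost-homomorphism trace) that is not co-sofic (resp.\ not a limit of genuine finite-dimensional characters).

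Next I would set up the reduction so as to make the ``same dimension'' constraint explicit, since this is where the whole phenomenon lives. Soficity (resp.\ hyperlinearity) of $\Gamma$ furnishes asymptotically faithful approximations $\varphi_k\colon\Gamma\to G_{n_k}$; restricting along $\pi$ gives $f_k(s)=\varphi_k(\pi(s))$, and almost-multiplicativity shows that $(f_k)$ is a $\calG$-stability-challenge. If $\Gamma$ were stable, a solution would be a sequence of \emph{genuine} homomorphisms $\rho_k\colon\Gamma\to G_{n_k}$ — into the \emph{same} groups $G_{n_k}$, hence finite permutation actions on $[n_k]$ (resp.\ $n_k$-dimensional unitary representations) — with $d_{n_k}(\rho_k(s),\varphi_k(s))\to0$ for every $s\in S$, and therefore, by bi-invariance and the triangle inequality applied wordwise, asymptotically faithful. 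Passing to an ultralimit, the stabilizer distributions $\mu_{\rho_k}$ (resp.\ the normalized characters $\tau_{\rho_k}$) would converge to the very sofic IRS (resp.\ trace) determined by the $\varphi_k$, so that limit would be co-sofic — contradicting the previous paragraph, once a non-co-sofic target has been produced.

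The heart of the matter, and the step I expect to be the main obstacle, is to produce — using only that $\Gamma$ is infinite and has Property $\T$ — a sofic IRS (resp.\ an almost-homomorphism trace) that is \emph{not} co-sofic. Here Property $\T$ enters through its spectral gap: for a fixed Kazhdan set there is $\delta>0$ such that in every unitary representation of $\Gamma$ without invariant vectors the averaging operator over that set has norm at most $1-\delta$, so genuine finite actions of $\Gamma$ are uniform expanders and genuine finite-dimensional representations have a rigid, gapped spectrum on the complement of their invariant vectors. The task is then to build an almost-action (resp.\ almost-representation) whose limiting invariant, while perfectly realizable in the metric ultraproduct and thus sofic (resp.\ realizable by almost-homomorphisms), cannot be matched by any sequence of genuine objects of the \emph{same} dimension $n_k$, precisely because the gap forced by $\T$ obstructs the dimension bookkeeping. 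It is exactly this same-dimension bookkeeping that fails: allowing $n_k$ to grow by a sublinear amount (flexible stability) would let one absorb the discrepancy, which is why the flexible notion proposed in the paper repairs the deficiency, and why infiniteness of $\Gamma$ is needed, so that the relevant trace/IRS is genuinely infinite and the obstruction does not collapse. Carrying out this construction, and verifying both that the target is sofic and that Property $\T$ rules out co-soficity at matched dimension, is where essentially all of the work resides.
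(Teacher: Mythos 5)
Your proposal has a genuine gap, and it is not merely that you defer the key construction (``where essentially all of the work resides''): the invariants you build the whole argument on cannot detect the obstruction, so the deferred step is actually impossible for the paper's motivating examples. Your plan is to use the necessary condition ``$\calP$-stable $\Rightarrow$ every sofic IRS is co-sofic'' and then to exhibit, from Property $\T$ and infiniteness, a sofic IRS that is not co-sofic. But the stabilizer distribution is blind to exactly the ``same-dimension bookkeeping'' that you yourself identify as the heart of the matter: deleting one point from a genuine action on $n$ points perturbs the normalized stabilizer statistics by $O(1/n)$, so the stability challenges that witness non-stability in this paper (restrictions of the genuine transitive actions on $\Gamma/\Lambda_n$ to $n-1$ points) have limiting IRS equal to a limit of stabilizer distributions of genuine finite actions, i.e.\ a co-sofic one. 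Worse, for $\Gamma=\SL_3(\ZZ)$ --- the case answering Hadwin--Shulman --- the Stuck--Zimmer theorem says every ergodic IRS is either $\delta_{\{1\}}$ or the uniform measure on the conjugates of a finite-index subgroup; $\delta_{\{1\}}$ is co-sofic because $\SL_3(\ZZ)$ is residually finite, finite-index IRSs are co-sofic by definition, and the co-sofic IRSs form a weak-$*$ closed convex set, so \emph{every} IRS of $\SL_3(\ZZ)$ is co-sofic and your sought-after example cannot exist. The same dimension-insensitivity afflicts the normalized character on the unitary side (it moves by $O(1/n)$ when the dimension changes by $1$). In effect, your framework can only ever obstruct the \emph{flexible} notion of stability of Section \ref{subsec:flexi-stab}, which is precisely the notion the paper suggests these groups may satisfy; what your reduction in the second paragraph actually proves is only the Glebsky--Rivera statement that a sofic, $\calP$-stable group is residually finite. (Also, the biconditional you assert in your first paragraph is only known for amenable groups; but the necessary direction is all you use, so this is a side issue.)

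The paper's proof instead works at the level of representations, where the dimension constraint survives. Theorem \ref{thm:intro-sofic-hyperlinear} is reduced to Theorem \ref{thm:intro-stable-fi-subgroups} (Property $\T$ plus stability forces only finitely many finite-index subgroups) together with residual finiteness from Glebsky--Rivera; a residually finite group with finitely many finite-index subgroups is finite. For Theorem \ref{thm:intro-stable-fi-subgroups}, one takes the actions on $\calB_n=\Gamma/\Lambda_n$ with $[\Gamma:\Lambda_n]\to\infty$, deletes one point to get an almost-action on $\calB_n^0$ (Proposition \ref{prop:restriction-is-almost-solution-with-almost-morphism}); a solution of this challenge would be a genuine action on exactly $|\calB_n|-1$ points close to it; Property $\T$, applied to the $\Gamma$-representation $\Hom_{\CC}\left(\CC\left[\calB_n^0\right],\CC\left[\calB_n\right]\right)$ (Lemma \ref{lem:under-t-almost-morphism-is-close-to-morphism}), converts the almost-equivariant inclusion map into a genuine morphism of $\Gamma$-representations at normalized distance $o(1)$ from the inclusion; and an orbit-counting argument (Proposition \ref{prop:inclusion-map-is-far-from-morphism}(ii), or Schur's Lemma in the unitary case) shows every such morphism has distance at least $\frac{1}{\sqrt 2}\|f\|_{\F}$ from the inclusion --- a contradiction. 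The point is that ``being a morphism into the $n$-point representation from an $(n-1)$-point representation'' is heavily constrained, even though the two actions are statistically indistinguishable; any repair of your approach must replace the IRS/trace by an invariant that remembers the ambient dimension, and that is essentially the paper's argument.
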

As every finitely generated linear group is sofic, all lattices
in higher rank simple algebraic groups over local fields are neither $\calP$-stable nor $\HS$-stable.
In particular, this holds for $\SL_n\left(\ZZ\right)$, $n\geq 3$,
answering a question of Hadwin and Shulman \cite{HS2}.

Theorem \ref{thm:intro-sofic-hyperlinear} is a corollary of the following:
\begin{thm}
\label{thm:intro-stable-fi-subgroups}Assume that $\Gamma$ has Kazhdan's
Property $\T$, and is either $\calP$-stable or $\HS$-stable. Then,
$\Gamma$ has only finitely many finite-index subgroups.
\end{thm}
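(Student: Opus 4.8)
I will prove the contrapositive: assuming $\Gamma$ has Property $\T$ and infinitely many finite-index subgroups, I will show that $\Gamma$ is neither $\calP$-stable nor $\HS$-stable. Since $\Gamma$ is finitely generated (being a quotient of the finitely generated free group $\FF$), there are only finitely many subgroups of each index, so ``infinitely many finite-index subgroups'' yields subgroups of unbounded index; passing to normal cores gives a sequence of finite quotients $\pi_k\colon\Gamma\twoheadrightarrow Q_k$ with $q_k:=|Q_k|\to\infty$. Composing each $\pi_k$ with the left-regular representation of $Q_k$ produces genuine homomorphisms $\psi_k\colon\Gamma\to\Sym(q_k)$ (respectively unitary representations $\Gamma\to\U(q_k)$), which serve as the raw material.

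The plan is to build a stability-challenge that is \emph{macroscopically} far, in the normalized metric, from every genuine homomorphism, and this is exactly what the normalization makes possible: since $d_n^{\HS}=n^{-1/2}d_n^{(2)}$ (and similarly for Hamming), a deformation spread over a $\Theta(1)$-fraction of the $q_k$ coordinates but with vanishing \emph{averaged} effect is an admissible challenge, whereas in the unnormalized Schatten metric it would have macroscopic defect. It is cleanest to work in the metric ultraproduct $\mathbf S=\prod_{k\to\mathcal U}\bigl(\Sym(q_k),d^{\Hamming}\bigr)$ (resp. the unitary group of the tracial ultrapower $\prod_{k\to\mathcal U}\bigl(\U(q_k),d^{\HS}\bigr)$): a $\calP$- (resp. $\HS$-) stability-challenge is the same as a homomorphism $\bar f\colon\Gamma\to\mathbf S$ with $\Ker(\pi)$ in its kernel, and a solution is a coordinatewise lift of $\bar f$ to honest homomorphisms. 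The profinite action of $\Gamma$ on $K=\overline{\pi(\Gamma)}\le\prod_k Q_k$ gives one such $\bar f$, honest in each coordinate; the idea is to deform it by a global twist, realized on $K\times[m]$ via an approximate cocycle $c\colon\Gamma\times K\to\Sym(m)$ (resp. into $\U(m)$) assembled from the $Q_k$, so that $\bar f$ remains a homomorphism but carries a nontrivial invariant shared by no genuine finite representation.

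Property $\T$ then shows $\bar f$ has no solution. First, $\T$ forces a uniform spectral gap: the Schreier graphs of the $\psi_k$ form an expander family, so any genuine homomorphism that is $o(1)$-close to $\psi_k$ must agree with it up to conjugation by a near-identity permutation (isolation of the finite-quotient homomorphisms). Hence any putative solution is pinned to the profinite part, leaving the twist $c$ as the only degree of freedom. Second, $\T$ (through vanishing of $H^1$ and the attendant rigidity of approximate cocycles into compact, resp. unitary, targets) prevents $c$ from being absorbed into an honest representation unless it is cohomologically trivial. The contradiction is extracted from an invariant that varies continuously in the normalized metric but is quantized for genuine homomorphisms: in the permutation case the limiting permutation character $w\mapsto\lim_{\mathcal U}\bigl(\text{proportion of fixed points of }\bar f(w)\bigr)$, which for a liftable challenge must be a limit of characters of genuine finite-quotient actions; in the unitary case the Murray--von Neumann dimensions of the spectral projections of $\bar f$, which for a genuine $q_k$-dimensional representation are integer multiples of $1/q_k$. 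The twisted $\bar f$ is designed to push this invariant to a value incompatible with that discrete set.

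The main obstacle is the second step: constructing the twist $c$ so that $\bar f$ is genuinely non-liftable, and proving the matching lower bound on the distance from $\bar f$ to \emph{all} genuine homomorphisms. Both hypotheses are indispensable here --- Property $\T$ supplies the rigidity that quantizes the relevant invariant and forbids small-correction escapes, while the infinitude of finite quotients supplies arbitrarily large $q_k$ over which to spread the twist so that its normalized defect vanishes. I expect the hardest point to be upgrading the expander estimate from ``nearby genuine homomorphisms are conjugate'' to ``no genuine homomorphism matches the twisted $\bar f$'', since this must control corrections that are individually small yet potentially globally coherent.
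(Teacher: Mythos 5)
Your proposal is a strategy outline whose mathematical heart is missing, and you say so yourself: the ``twist'' $c$ is never constructed, its vanishing normalized defect is never verified, and the lower bound on its distance to \emph{all} genuine homomorphisms is never proved. Beyond being incomplete, the plan has an internal tension. Property $\T$ rigidity (vanishing of $H^1$, isolation of representations) is precisely the mechanism that shows almost-cocycles and almost-homomorphisms \emph{are} correctable --- this is why Property $\T$ groups are $\calG^{(2)}$-stable in the unnormalized setting of De Chiffre--Glebsky--Lubotzky--Thom --- so any twist ``assembled from the $Q_k$'' is in danger of being absorbed by the very rigidity you invoke against it. Moreover, the quantized invariants you propose do not obstruct liftability on their own: the fixed-point proportions of genuine actions on $q_k$ points, and the Murray--von Neumann dimensions coming from $q_k$-dimensional representations, are multiples of $1/q_k$, and since $q_k\to\infty$ these sets become dense in $\left[0,1\right]$; a limiting invariant of a challenge can therefore always be approximated by quantized values unless some additional constraint ties the challenge and the putative solution to the \emph{same} finite dimension. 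Identifying and exploiting such a constraint is exactly the step your sketch leaves open.

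The paper supplies that constraint by an elementary device absent from your sketch: delete a point. Starting from the genuine transitive action of $\Gamma$ on $\calB_n=\Gamma/\Lambda_n$ ($k$ points, $k\to\infty$), one compresses it to the $(k-1)$-point set $\calB_n^0$ (Lemma \ref{lem:restriction-of-domain-and-range}, Proposition \ref{prop:restriction-is-almost-solution-with-almost-morphism}); the defect is $O(3^{\left|w\right|})$ in the Frobenius norm, hence $o(1)$ after normalization, so this is a legitimate challenge --- this is where normalization enters, as in your sketch. If a solution $\tilde{\rho}_n$ existed \emph{on the same $k-1$ points}, the inclusion $f_n:\CC\left[\calB_n^0\right]\rightarrow\CC\left[\calB_n\right]$ would be an almost-invariant vector of the representation $\Hom_{\CC}\left(\tilde{\rho}_n,\alpha_n\right)$, hence by Property $\T$ (Lemma \ref{lem:under-t-almost-morphism-is-close-to-morphism}) it would be $o(\|f_n\|_{\F})$-close to a genuine morphism of $\Gamma$-representations; but Proposition \ref{prop:inclusion-map-is-far-from-morphism} shows every such morphism is at distance at least $\frac{1}{\sqrt{2}}\cdot\|f_n\|_{\F}$ from $f_n$ (Schur's Lemma in the irreducible case, an orbit-size count using transitivity on $\calB_n$ in the permutation case). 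So the obstruction is a dimension deficiency --- any correction would need $k$ points rather than $k-1$, which the definition of stability forbids (and which the flexible stability of Section \ref{subsec:flexi-stab} would permit) --- not a cohomologically nontrivial twist. Note finally that your sketch has no counterpart of the extra step the paper needs in the $\HS$ case, where ``finitely many irreducible finite-dimensional representations'' must still be converted into ``finitely many finite-index subgroups'' via the faithfulness of $\CC\left[\Gamma/\Lambda_n\right]$.
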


Theorem \ref{thm:intro-stable-fi-subgroups} is proved in Section
\ref{sec:proofs}. We can already show how it implies Theorem \ref{thm:intro-sofic-hyperlinear}:
\begin{proof}
[Proof of Theorem \ref{thm:intro-sofic-hyperlinear}]Assume that $\Gamma$
is sofic and $\calP$-stable. A well-known observation (see Theorem 2
in \cite{glebsky-rivera}) says that, in this case, $\Gamma$ is residually-finite.
If further, $\Gamma$ has Property $\T$, then by Theorem \ref{thm:intro-stable-fi-subgroups},
it has only finitely many finite-index subgroups, and so it is finite.

Assume, instead, that $\Gamma$ is hyperlinear and $\HS$-stable.
It is easy to see that in this case too, $\Gamma$ is residually-finite.
Indeed, arguing as in \cite{glebsky-rivera}, we see that $\Gamma$
is residually-linear, and so it is residually-finite since finitely-generated
linear groups are residually-finite. So, as before, if, further, $\Gamma$ has
Property $\T$, it must be finite.
\end{proof}

In Section \ref{sec:AutFn-MCG}, we deduce the following:
\begin{thm}
	\label{thm:intro-aut-fn-mcg}For $g\geq 3$, the mapping class group
	$\MCG\left(g\right)$ of an orientable closed surface of genus $g$
	is not $\calP$-stable.
	For $n\geq 3$, the (outer) automorphism group $\Aut\left(\FF_{n}\right)$
	(and $\Out\left(\FF_{n}\right)$) is not $\calP$-stable.
\end{thm}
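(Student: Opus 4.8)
The plan is to reduce the statement to the non-$\calP$-stability of higher-rank arithmetic groups, which is already supplied by Theorem~\ref{thm:intro-sofic-hyperlinear}, by transporting non-stability \emph{upward} along a surjection with finitely generated kernel. Concretely, I would use the symplectic representation $\MCG(g)\twoheadrightarrow\Sp_{2g}(\ZZ)$ and the abelianization maps $\Aut(\FF_n)\twoheadrightarrow\GL_n(\ZZ)$ and $\Out(\FF_n)\twoheadrightarrow\GL_n(\ZZ)$. In each case the target is an infinite, finitely generated linear group---hence sofic---with Property~$\T$ (for $\Sp_{2g}(\ZZ)$ when $g\ge2$, and for $\GL_n(\ZZ)$ when $n\ge3$, since the finite-index subgroup $\SL_n(\ZZ)$ has Property~$\T$), so by Theorem~\ref{thm:intro-sofic-hyperlinear}(i) none of these targets is $\calP$-stable. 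Note that I deliberately avoid asserting Property~$\T$ for the source groups, which is unknown for $\MCG(g)$ and unavailable in low rank for $\Aut(\FF_n)$.

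The key step is a transfer lemma that I would isolate and prove: \emph{if $\phi\colon\Gamma\twoheadrightarrow Q$ is a surjection whose kernel $N$ is finitely generated and $Q$ is not $\calP$-stable, then $\Gamma$ is not $\calP$-stable.} Suppose for contradiction that $\Gamma$ is $\calP$-stable, and let $(f_k)$ be a $\calP$-stability-challenge for $Q$ admitting no solution. Presenting both groups over the same $S$, we have $\Ker(\FF\to\Gamma)\subseteq\Ker(\FF\to Q)$, so $(f_k)$ is also a challenge for $\Gamma$; $\calP$-stability then yields honest homomorphisms $\rho_k\colon\Gamma\to\Sym(n_k)$ with $\sum_{s\in S}d_{n_k}^{\Hamming}(f_k(s),\rho_k(s))\to0$. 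Writing $N=\langle t_1,\dotsc,t_m\rangle$ with each $t_i$ a fixed word in $S$, bi-invariance of the Hamming metric gives $d_{n_k}^{\Hamming}(\rho_k(t_i),\mathbf 1)\to0$, because $\rho_k(t_i)$ is close to $f_k(t_i)$ and $f_k(t_i)\to\mathbf 1$ (as $t_i\in\Ker(\FF\to Q)$).

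Because there are only finitely many generators $t_i$, the set $\Fix(\rho_k(N))=\bigcap_i\Fix(\rho_k(t_i))$ has density tending to $1$ in $[n_k]$; this is exactly where finite generation of $N$ is indispensable. Since $N$ is normal in $\Gamma$, this fixed-point set is $\rho_k(\Gamma)$-invariant, so I may define a homomorphism $\sigma_k\colon\Gamma\to\Sym(n_k)$ equal to $\rho_k$ on $\Fix(\rho_k(N))$ and to the identity on its complement. By construction $\sigma_k$ kills $N$, hence factors through $Q$, while $d_{n_k}^{\Hamming}(\sigma_k(s),\rho_k(s))$ is bounded by the density of the complement and therefore tends to $0$; thus $\sigma_k$ stays asymptotically close to $f_k$. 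Consequently $(\sigma_k)$ solves the $Q$-challenge $(f_k)$, contradicting its choice. Applying the lemma to the three surjections above---using that the Torelli group $\mathcal{I}_g=\Ker(\MCG(g)\to\Sp_{2g}(\ZZ))$ is finitely generated precisely for $g\ge3$ (Johnson), and that $\mathrm{IA}_n$ and $\mathrm{OA}_n$ are finitely generated (Magnus)---then yields the theorem.

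The main obstacle is the transfer lemma, and within it the passage from ``$N$ acts almost trivially'' to ``$N$ acts genuinely trivially on an invariant set of full density.'' This is where finite generation of the kernel is essential: with infinitely many generators one could not control the density of $[n_k]\setminus\Fix(\rho_k(N))$, and indeed the genus bound $g\ge3$ is forced by Mess's theorem that $\mathcal{I}_2$ is not finitely generated. I would also verify the one metric computation on which everything hinges, namely that the normalized Hamming metric is subadditive on words of bounded length, so that closeness of $\rho_k$ and $f_k$ on the generators $S$ propagates to the finitely many $t_i$.
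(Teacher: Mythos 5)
Your proposal is correct and follows essentially the same route as the paper: the same epimorphisms $\MCG(g)\twoheadrightarrow\Sp_{2g}\left(\ZZ\right)$ and $\Aut\left(\FF_n\right),\Out\left(\FF_n\right)\twoheadrightarrow\GL_n\left(\ZZ\right)$, the same appeal to Theorem~\ref{thm:intro-sofic-hyperlinear}(i) for these infinite, sofic, Property~$\T$ targets, and the same use of finite generation of the Torelli-type kernels (Johnson, Magnus). The only difference is that the paper invokes the transfer step as a citation, Proposition~\ref{prop:P-stability-and-quotients} (a special case of Proposition A.3 of \cite{BM}), whereas you prove it outright; your fixed-point correction argument (restricting a genuine $\Gamma$-action to the $\rho_k(\Gamma)$-invariant set $\Fix\left(\rho_k(N)\right)$, whose complement has vanishing density by finite generation of $N$) is sound, and in fact establishes the stronger form of that proposition---finite generation of $\Gamma$ suffices, with no finite-presentation hypothesis---which the paper's remark attributes to \cite{BLtesting}.
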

We do not know if the theorem also holds for $\HS$-stability.
Its proof uses Theorem \ref{thm:intro-sofic-hyperlinear} and Proposition \ref{prop:P-stability-and-quotients},
which states that if $\Gamma$ is $\calP$-stable then so is its quotient by a normal subgroup $N$,
provided that $N$ is a finitely-generated group.

In Section \ref{sec:remarks}, we discuss variations
of Property $\T$ (e.g., Property $\ptau$ and relative Property $\T$),
the stability of semidirect products and free products with amalgamation and a flexible notion of stability, 
and suggest problems for further research.

In the appendix we reprove Theorem \ref{thm:intro-stable-fi-subgroups} using the
representation theory of Chevalley groups, under the assumption that $\Gamma$ has an infinite linear quotient.
In fact, we prove a somewhat stronger version of the theorem for such groups $\Gamma$.

\section*{Acknowledgments}

We thank Narutaka Ozawa for suggesting a proof of the non-$\calP$-stability
of $\SL_{3}\left(\ZZ\right)$, which led to the results described
in this paper.
This work has been supported by grants of the ERC, NSF and BSF.
This work is part of the PhD thesis of the first author at the Hebrew University of Jerusalem.

\section{\label{sec:proofs}The proof of Theorem \ref{thm:intro-stable-fi-subgroups}}

Before we begin, we record a simple observation regarding $\calP$-stability.
Fix formal elements $\left\{ v_{i}\right\} _{i=1}^{\infty}$, and
for every $n\in\NN$, let $\calB_{n}=\left(v_{1},\dotsc,v_{n}\right)$
serve as an ordered basis for a complex vector space $\calH_{n}$.
A permutation $\sigma\in\Sym\left(n\right)\cong\Sym\left(\calB_{n}\right)$
extends uniquely to an element of $\U\left(\calH_{n}\right)\cong\U\left(n\right)$,
giving an embedding $\iota:\Sym\left(n\right)\rightarrow\U\left(n\right)$.
For permutations $\sigma,\tau\in\Sym\left(n\right)$,
\[
d_{n}^{\HS}\left(\iota\left(\sigma\right),\iota\left(\tau\right)\right)=\sqrt{2\cdot d_{n}^{\Hamming}\left(\sigma,\tau\right)}\,\,\text{.}
\]
Therefore, for
$\calG_{0}=\left\{\left( \iota\left(\Sym\left(n\right)\right),d_n^{\HS}\right)\right\}_{n=1}^{\infty}$,
the group
$\Gamma$ is $\calP$-stable if and only if it is $\calG_{0}$-stable.
We also refer to $\calG_0$-stability-challenges and $\calP$-stability challenges interchangeably.

For a linear operator $T:\calH\rightarrow\calH$ on a finite-dimensional vector space $\calH$ and
a basis $\calB$ for $\calH$, we write $\left[T\right]_{\calB}$ for the matrix representing $T$ with
respect to $\calB$.
\begin{lem}
\label{lem:restriction-of-domain-and-range}Let $\calH$ be a finite-dimensional
complex Hilbert space with orthonormal ordered basis $\calB=\left(v_{1},\dotsc,v_{n}\right)$.
Let $\calB_{0}=\left(v_{1},\dotsc,v_{n-1}\right)$ and $\calH_{0}=\spn_{\CC}\calB_{0}$.
Let $T:\calH\rightarrow\calH$ be a linear operator. Write $P_{0}:\calH\rightarrow\calH_{0}$
for the orthogonal projection onto $\calH_{0}$, and $T_{0}=P_{0}\circ T\mid_{\calH_{0}}:\calH_{0}\rightarrow\calH_{0}$.
Then,\renewcommand{\labelenumi}{\roman{enumi})}
\begin{enumerate}
\item If $T$ permutes $\calB$, then there is a linear operator $A_{0}:\calH_{0}\rightarrow\calH_{0}$,
which permutes $\calB_{0}$, such that
\[
\|T_{0}-A_{0}\|_{\F}\leq1
\]
\item If $T$ is unitary, then there is a unitary linear operator $A_{0}:\calH_{0}\rightarrow\calH_{0}$,
such that
\[
\|T_{0}-A_{0}\|_{\F}\leq1
\]
\end{enumerate}
In both cases, the inclusion map $f:\calH_{0}\rightarrow\calH$ satisfies
\[
\|T^{-1}\circ f\circ A_{0}-f\|_{\F}\leq2
\]

\end{lem}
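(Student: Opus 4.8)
The plan is to handle both cases simultaneously through the block decomposition $\calH = \calH_0 \oplus \CC v_n$, in which $T_0$ is exactly the top-left $(n-1)\times(n-1)$ block of $T$. In both cases $T$ is a unitary operator (a permutation matrix is unitary), so $T^{-1} = T^*$; I write $R = (\id - P_0)\,T\mid_{\calH_0}$ for the ``last-row'' part, i.e.\ the component of $T\mid_{\calH_0}$ landing in $\CC v_n$. The numerical input I will extract from unitarity is that $\|R\|_{\F} \le 1$, since the last row of a unitary matrix has norm $1$.

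For part (i), let $\sigma$ be the permutation with $T v_i = v_{\sigma(i)}$. If $\sigma(n) = n$ then $T$ preserves $\calH_0$ and $T_0$ already permutes $\calB_0$, so I take $A_0 = T_0$. Otherwise set $j = \sigma^{-1}(n)$ and $m = \sigma(n)$, both in $\{1,\dots,n-1\}$. Then $T_0 v_i = v_{\sigma(i)}$ for $i \ne j$, while $T_0 v_j = P_0 v_n = 0$, and the values $\{\sigma(i) : i \ne j,\ i < n\}$ are exactly $\{1,\dots,n-1\}\setminus\{m\}$. Hence defining $A_0 v_i = v_{\sigma(i)}$ for $i \ne j$ and $A_0 v_j = v_m$ yields a permutation of $\calB_0$ that differs from $T_0$ only on $v_j$, where $(T_0 - A_0)v_j = -v_m$; thus $\|T_0 - A_0\|_{\F} = 1$.

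For part (ii), I would compute the singular values of the compression $T_0$. Reading off the $(1,1)$-block of the identity $T^*T = \id$ gives $T_0^* T_0 = \id_{\calH_0} - cc^*$, where $c$ is the vector with entries $\langle v_n, T v_i\rangle$, $i<n$ (so $\|c\| = \|R\|_{\F} \le 1$). Since $cc^*$ is positive semidefinite of rank at most one with top eigenvalue $\|c\|^2$, the operator $T_0$ has singular values $1$ (with multiplicity $n-2$) and $s := \sqrt{1 - \|c\|^2} \in [0,1]$. Taking $A_0$ to be the unitary polar factor $A_0 = WV^*$ in a singular value decomposition $T_0 = W\Sigma V^*$, I get $\|T_0 - A_0\|_{\F} = \|\Sigma - \id\|_{\F} = 1 - s \le 1$. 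The one point needing care is that $T_0$ may fail to be invertible, which is why I use the SVD rather than a literal polar decomposition.

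Finally, for the last inequality I use that left multiplication by the unitary $T^{-1}$ preserves $\|\cdot\|_{\F}$, so $\|T^{-1} f A_0 - f\|_{\F} = \|f A_0 - T f\|_{\F}$. The triangle inequality then splits this as $\|f A_0 - f T_0\|_{\F} + \|f T_0 - T f\|_{\F} = \|A_0 - T_0\|_{\F} + \|R\|_{\F}$, using that $f$ is isometric, that $f T_0 = P_0 T\mid_{\calH_0}$ and $T f = T\mid_{\calH_0}$, and hence $f T_0 - T f = -R$. Both terms are at most $1$ by the above, giving the bound $2$. I expect part (ii) --- identifying the singular values of the compression and therefore the optimal unitary approximant --- to be the main obstacle; parts (i) and the final estimate are essentially bookkeeping.
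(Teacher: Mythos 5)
Your proposal is correct and follows essentially the same route as the paper's proof: the identical cycle-repair construction in part (i), the block computation $T_0^*T_0 = I - (\text{rank-one PSD})$ to identify the singular values in part (ii), and the same unitary-invariance-plus-triangle-inequality estimate for the final claim. Your use of the SVD unitary factor $WV^*$ is just the polar decomposition in disguise (the paper cites Horn--Johnson for its existence in the possibly singular case), so the two arguments coincide.
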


\begin{proof}
(i) Assume that $T$ permutes $\calB$. Denote $T^{-1}\left(v_{n}\right)=v_{i_{0}}$,
$1\leq i_{0}\leq n$. Define a linear operator $A_{0}:\calH_{0}\rightarrow\calH_{0}$
on the elements of the basis $\calB_{0}$ by 
\[
A_{0}\left(v_{i}\right)=\begin{cases}
T\left(v_{i}\right) & i\neq i_{0}\\
T\left(v_{n}\right)=T\left(T\left(v_{i}\right)\right) & i=i_{0}
\end{cases}\,\,\,\text{.}
\]
Then, only the $i_{0}$-th column of $\left[T_{0}-A_{0}\right]_{\calB_{0}}$
may be nonzero, and its norm is $0$ if $i_{0}=n$, or $1$
otherwise. In any case, 
\[
\|T_{0}-A_{0}\|_{\F}\leq1
\]

(ii) Assume that $T$ is unitary. Take a polar decomposition $T_{0}=A_{0}\cdot\sqrt{T_{0}^{*}T_{0}}$
of $T_{0}$, where $A_{0}\in\U$$\left(\calH_{0}\right)$ (see Theorem
3.1.9(c) in \cite{horn-johnson-1994}). Note that, generally, $A_{0}$
is only guaranteed to exist, but is not unique. Then,
\begin{equation}
\|T_{0}-A_{0}\|_{\F}=\|A_{0}^{-1}\cdot\left(T_{0}-A_{0}\right)\|_{\F}=\|\sqrt{T_{0}^{*}T_{0}}-I\|_{\F}\label{eq:unitary-T0-A0}
\end{equation}
Let $u\in M_{1\times\left(n-1\right)}\left(\CC\right)$ be the bottom
row of $\left[T\right]_{\calB}$, with the rightmost entry removed.
Since $T$ is unitary, we have $\|u\|\leq1$. Partition $\left[T\right]_{\calB}$
as a block matrix, where $\left[T_{0}\right]_{\calB_{0}}$ is the
top-left block, and $u$ is the bottom-left block. Since $T^{*}T=I$,
we get $\left[T_{0}^{*}T_{0}\right]_{\calB_{0}}+u^{*}u=I_{n-1}$,
i.e., $\left[T_{0}^{*}T_{0}\right]_{\calB_{0}}=I_{n-1}-u^{*}u$. The
eigenvalues of $u^{*}u$ are $0$ (with multiplicity $n-2$), and
$\langle u,u\rangle=\|u\|^{2}$ (with multiplicity $1$, corresponding
to the right eigenvector $u^{*}$). So, $\sqrt{T_{0}^{*}T_{0}}$ is
a unitarily diagonalizable operator whose eigenvalues are $1$, with
multiplicity $n-2$, and $\sqrt{1-\|u\|^{2}}$, with multiplicity
$1$. Hence,
\[
\|\sqrt{T_{0}^{*}T_{0}}-I\|_{\F}=\left|\sqrt{1-\|u\|^{2}}-1\right|\le1
\]
which, together with (\ref{eq:unitary-T0-A0}), implies the desired
result.

\smallskip{}
As for the last claim,
\begin{align*}
\|T^{-1}\circ f\circ A_{0}-f\|_{\F} & =\|f\circ A_{0}-T\circ f\|_{\F}\\
 & =\|f\circ A_{0}-T\mid_{\calH_{0}}\|_{\F}\\
 & \leq\|f\circ\left(A_{0}-T_{0}\right)\|_{\F}+\|f\circ T_{0}-T\mid_{\calH_{0}}\|_{\F}\\
 & =\|T_{0}-A_{0}\|_{\F}+\|\left[\left(f\circ P_{0}\circ T-T\right)\mid_{\calH_{0}}\right]_{\calB_{0}}^{\calB}\|_{\F}\\
 & =\|T_{0}-A_{0}\|_{\F}+\|u\|_{\F}\\
 & \leq 1+1=2
\end{align*}
where $u$ is, again, the bottom-left row of $\left[T\right]_{\calB}$,
with the rightmost entry removed.

\end{proof}
\begin{lem}
\label{lem:perturbation-of-unitaries}Let $\calH$ be a finite-dimensional
complex Hilbert space. Let $U_{1},\dotsc,U_{l}\in\U$$\left(\calH\right)$ and 
$E_{1},\dotsc,E_{l}\in\End_{\CC}\left(\calH\right)$. Let $c\geq0$,
and assume that $\|E_{i}\|_{\F}\leq c$ for all $1\leq i\leq l$. Then,
\[
\|\prod_{i=1}^{l}\left(U_{i}+E_{i}\right)-\prod_{i=1}^{l}U_{i}\|_{\F}\leq\left(c+1\right)^{l}
\]
\end{lem}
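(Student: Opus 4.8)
The plan is to expand the difference of products by a telescoping identity and then estimate each resulting term by playing the Frobenius norm against the operator norm. Writing $A_i = U_i + E_i$, so that $A_i - U_i = E_i$, I would start from the identity
\[
\prod_{i=1}^{l} A_i - \prod_{i=1}^{l} U_i = \sum_{k=1}^{l} \Bigl(\prod_{i=1}^{k-1} A_i\Bigr)\, E_k \,\Bigl(\prod_{i=k+1}^{l} U_i\Bigr),
\]
valid with the convention that empty products equal the identity, and checked by expanding the right-hand side and cancelling consecutive terms. The advantage of this form is that every summand contains exactly one perturbation factor $E_k$, for which the hypothesis $\|E_k\|_{\F} \le c$ is available, sandwiched between a product of perturbed operators on the left and a product of genuine unitaries on the right.

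Next I would collect the two norm facts that control each summand. Since $\opnorm{E_k} \le \|E_k\|_{\F} \le c$ and $\opnorm{U_i} = 1$, the triangle inequality gives $\opnorm{A_i} \le c + 1$, so submultiplicativity of the operator norm yields $\opnorm{\prod_{i=1}^{k-1} A_i} \le (c+1)^{k-1}$; meanwhile the product $\prod_{i=k+1}^{l} U_i$ is again unitary and hence has operator norm $1$. Applying the mixed submultiplicativity $\|X Y Z\|_{\F} \le \opnorm{X}\,\|Y\|_{\F}\,\opnorm{Z}$ to the $k$-th summand bounds it by $(c+1)^{k-1}\cdot c \cdot 1$. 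Summing the resulting geometric series gives
\[
\Bigl\|\prod_{i=1}^{l} A_i - \prod_{i=1}^{l} U_i\Bigr\|_{\F} \le \sum_{k=1}^{l} c\,(c+1)^{k-1} = (c+1)^{l} - 1 \le (c+1)^{l},
\]
which is in fact slightly sharper than the stated bound.

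There is no serious obstacle here; the single point that requires care is the choice of norms in the estimates. One must measure the long products $\prod A_i$ and $\prod U_i$ in the \emph{operator} norm, where a unitary has norm $1$, rather than in the Frobenius norm, where $\|U_i\|_{\F} = \sqrt{\dim \calH}$ depends on the dimension and would ruin the bound. Thus the crux is to keep the isolated factor $E_k$ in the Frobenius norm while passing to the operator norm for everything multiplying it, via $\|XYZ\|_{\F} \le \opnorm{X}\,\|Y\|_{\F}\,\opnorm{Z}$. An entirely equivalent route is a short induction on $l$: split off the first factor as $E_1 \prod_{i=2}^{l} A_i + U_1\bigl(\prod_{i=2}^{l} A_i - \prod_{i=2}^{l} U_i\bigr)$, use unitary invariance $\|U_1 X\|_{\F} = \|X\|_{\F}$ on the second term, bound $\opnorm{\prod_{i=2}^{l} A_i} \le (c+1)^{l-1}$ on the first, and apply the inductive hypothesis.
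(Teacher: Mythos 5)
Your proof is correct, and it takes a genuinely different route from the paper's. The paper expands $\prod_{i=1}^{l}(U_i+E_i)$ multilinearly into a sum of $2^l$ mixed products indexed by the subsets $A_0\subseteq[l]$ recording which positions carry an $E_i$, bounds each mixed product with $A_0\neq\emptyset$ by $c^{|A_0|}$ using only Frobenius-norm calculus (invariance of $\|\cdot\|_{\F}$ under multiplication by unitaries, plus submultiplicativity to absorb each $E_i$ factor), and then sums the binomial series $\sum_{i=1}^{l}\binom{l}{i}c^{i}=(c+1)^l-1$. You instead telescope, producing only $l$ terms, each containing exactly one factor $E_k$, and estimate each term via the mixed bound $\|XYZ\|_{\F}\leq\opnorm{X}\,\|Y\|_{\F}\,\opnorm{Z}$ together with $\opnorm{U_i+E_i}\leq c+1$; summing the geometric series gives the same sharpened bound $(c+1)^l-1$. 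The trade-off: your argument imports the operator norm and the inequality $\opnorm{E}\leq\|E\|_{\F}$, but it is shorter ($l$ terms rather than $2^l$) and more robust, since it only needs the $U_i$ and their products to have operator norm $1$; the paper's argument stays entirely inside the Frobenius norm, with unitary invariance doing the work that your operator-norm estimates do. In particular, the pitfall you flag --- that long products must not be measured in $\|\cdot\|_{\F}$, since $\|U\|_{\F}=\sqrt{\dim\calH}$ --- is exactly the right point of care, and the two proofs evade it differently: you pass to $\opnorm{\cdot}$ for everything multiplying $E_k$, whereas the paper never takes the Frobenius norm of a purely unitary product at all, cancelling such factors by unitary invariance instead.
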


\begin{proof}
Let $\emptyset\neq A_{0}\subset\left[l\right]$ (where $\left[l\right]=\left\{ 1,\dotsc,l\right\} $).
For each $1\leq i\leq l$, denote $M_{i}=\begin{cases}
E_{i} & i\in A_{0}\\
U_{i} & i\notin A_{0}
\end{cases}$. Let $1\leq k\leq l$, and consider the product $\prod_{i=1}^{k}M_{i}$.
On one hand, if $k\notin A_{0}$, then
\[
\|\prod_{i=1}^{k}M_{i}\|_{\F}=\|\left(\prod_{i=1}^{k-1}M_{i}\right)\cdot U_{k}\|_{\F}=\|\left(\prod_{i=1}^{k-1}M_{i}\right)\|_{\F}
\]
since the Frobenius norm $\|\cdot\|_{\F}$ is invariant under multiplication
by unitary operators. On the other hand, if $k\in A_{0}$, then
\[
\|\prod_{i=1}^{k}M_{i}\|_{\F}=\|\left(\prod_{i=1}^{k-1}M_{i}\right)\cdot E_{k}\|_{\F}\leq\|\prod_{i=1}^{k-1}M_{i}\|_{\F}\cdot\|E_{k}\|_{\F}
\]
since $\|\cdot\|_{\F}$ is submultiplicative. So, we conclude by induction
that
\[
\|\prod_{i=1}^{k}M_{i}\|_{\F}\leq\prod_{i\in A_{0}}\|E_{i}\|_{\F}\leq c^{\left|A_{0}\right|}\,\,\text{.}
\]
Together with the triangle inequality, this implies that

\begin{align*}
\|\prod_{i=1}^{l}\left(U_{i}+E_{i}\right)-\prod_{i=1}^{l}U_{i}\|_{\F} & \leq\sum_{\emptyset\neq A\subset\left[l\right]}c^{\left|A\right|}\\
 & =\sum_{i=1}^{l}\binom{l}{i}\cdot c^{i}\\
 & \leq\left(c+1\right)^{l}\,\,\text{.}
\end{align*}
\end{proof}

For a word $w\in\FF$, write $\left|w\right|$ for the \emph{length} of $w$, i.e., the length of $w$ when written as a reduced
word over $S^{\pm}$.
Recall that we write $\pi$ for the fixed quotient map $\pi:\FF\rightarrow\Gamma$.
\begin{prop}
\label{prop:restriction-is-almost-solution-with-almost-morphism}Let
$\left(\calH,\alpha\right)$ be a finite-dimensional unitary representation
of $\Gamma$. Let $\calH_{0}\subset\calH$ be a subspace of co-dimension
$1$. Let $\calB_{0}\subset\calB$ be orthonormal bases for $\calH_{0},\calH$, respectively.
Then,\renewcommand{\labelenumi}{\roman{enumi})}
\begin{enumerate}
\item There is a function $\rho:S\rightarrow\U\left(\calH_{0}\right)$,
such that $\|\rho\left(w\right)-I\|_{\F}\leq3^{\left|w\right|}$ for
every $w\in\Ker\left(\pi\right)$, and the inclusion map $f:\calH_{0}\rightarrow\calH$
satisfies
\[
\|\alpha\left(s^{-1}\right)\circ f\circ\rho\left(s\right)-f\|_{\F}\leq2
\]
for each $s\in S$.
\item If, furthermore, each $\alpha\left(s\right)$ permutes $\calB$,
then $\rho$ above can be chosen such that each $\rho\left(s\right)$
permutes $\calB_{0}$.
\end{enumerate}
\end{prop}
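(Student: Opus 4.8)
The plan is to build $\rho$ one generator at a time from Lemma \ref{lem:restriction-of-domain-and-range}, and then control it on $\Ker(\pi)$ by a telescoping estimate that exploits the two-sided unitary invariance of $\|\cdot\|_{\F}$. For each $s\in S$ I would apply Lemma \ref{lem:restriction-of-domain-and-range}(ii) to the unitary $T=\alpha(s)$ (with the given bases $\calB_{0}\subset\calB$) and set $\rho(s):=A_{0}\in\U(\calH_{0})$. The final assertion of that lemma, read with $T^{-1}=\alpha(s^{-1})$ and $A_{0}=\rho(s)$, is literally the second required inequality $\|\alpha(s^{-1})\circ f\circ\rho(s)-f\|_{\F}\le 2$, so no work is needed there and everything reduces to the relator bound. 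For part (ii) of the proposition I would instead invoke Lemma \ref{lem:restriction-of-domain-and-range}(i) — legitimate because each $\alpha(s)$ now permutes $\calB$ — which supplies the same two inequalities and, in addition, makes $\rho(s)$ permute $\calB_{0}$.

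Next I would record, for each letter $x\in S^{\pm}$, the approximate intertwining $\|f\circ\rho(x)-\alpha(x)\circ f\|_{\F}\le 2$. For $x=s$ this follows from the inequality above by multiplying on the left by the unitary $\alpha(s)$; for $x=s^{-1}$ it follows by multiplying on the right by the unitary $\rho(s)^{-1}=\rho(s^{-1})$. In both cases $\|\cdot\|_{\F}$ is unchanged, being invariant under multiplication by unitaries on either side.

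The substance is the bound for $w\in\Ker(\pi)$. Writing the reduced word as $w=x_{1}\cdots x_{l}$ with $l=|w|$, so that $\rho(w)=\rho(x_{1})\cdots\rho(x_{l})$ and $\alpha(w)=\alpha(x_{1})\cdots\alpha(x_{l})$, I interpolate between $f\circ\rho(w)$ and $\alpha(w)\circ f$ by converting the factors from $\rho$ to $\alpha$ one index at a time. The $j$-th consecutive difference equals $\alpha(x_{1})\cdots\alpha(x_{j-1})\circ(\alpha(x_{j})\circ f-f\circ\rho(x_{j}))\circ\rho(x_{j+1})\cdots\rho(x_{l})$, whose $\|\cdot\|_{\F}$ equals that of the bracketed single-letter error (at most $2$) because it is flanked by unitaries acting on $\calH$ on the left and on $\calH_{0}$ on the right. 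Summing the $l$ differences gives $\|f\circ\rho(w)-\alpha(w)\circ f\|_{\F}\le 2|w|$. Since $w\in\Ker(\pi)$ forces $\alpha(w)=I$, hence $\alpha(w)\circ f=f$, and since $f$ is an isometry (so $\|f\circ B\|_{\F}=\|B\|_{\F}$), I conclude $\|\rho(w)-I\|_{\F}=\|f\circ\rho(w)-f\|_{\F}\le 2|w|\le 3^{|w|}$, which is the asserted bound (indeed a stronger, linear one). The identical computation settles (ii), where moreover each $\rho(s)$, and hence each $\rho(w)$, permutes $\calB_{0}$.

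The one delicate point — and the main obstacle — is keeping the accumulated error linear rather than exponential in $|w|$; this rests entirely on sandwiching each single-letter error between unitaries and invoking the two-sided unitary invariance of $\|\cdot\|_{\F}$. A cruder route that expanded $\rho(w)=\prod_{i}(U_{i}+E_{i})$ and applied Lemma \ref{lem:perturbation-of-unitaries} would also close the argument, but only with an exponential bound of the stated shape $3^{|w|}$. It is also worth remembering that $\rho$ is merely a function on $S$, so $\rho(w)$ denotes the result of the substitution $s\mapsto\rho(s)$; this is exactly what licenses treating $\rho(w)$ as the product of the $\rho(x_{i})$.
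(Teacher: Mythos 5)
Your proof is correct, and it departs from the paper's precisely at the substantive step. Both arguments construct $\rho$ the same way --- Lemma \ref{lem:restriction-of-domain-and-range} applied to each $\alpha\left(s\right)$ separately, case (i) of the lemma when the $\alpha\left(s\right)$ permute $\calB$ and case (ii) otherwise --- and both read the inequality $\|\alpha\left(s^{-1}\right)\circ f\circ\rho\left(s\right)-f\|_{\F}\leq2$ straight off the lemma's final claim. For the relator bound, however, the paper takes what you call the cruder route: it extends each $\rho\left(s\right)$ to $\calH$ as $\rho\left(s\right)\oplus{\bf 1}_{\calH_{0}^{\perp}}$, shows $\|\rho\left(s\right)\oplus{\bf 1}_{\calH_{0}^{\perp}}-\alpha\left(s\right)\|_{\F}\leq2$ (and the same for inverses via $\|A^{-1}-B^{-1}\|_{\F}=\|A-B\|_{\F}$ for unitaries), and then applies Lemma \ref{lem:perturbation-of-unitaries} with $c=2$ to obtain $\|\rho\left(w\right)\oplus{\bf 1}_{\calH_{0}^{\perp}}-\alpha\left(w\right)\|_{\F}\leq3^{\left|w\right|}$. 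Your telescoping argument instead stays inside $\Hom_{\CC}\left(\calH_{0},\calH\right)$, writes $f\circ\rho\left(w\right)-\alpha\left(w\right)\circ f$ as a sum of $\left|w\right|$ single-letter errors, each flanked by unitaries and hence of norm at most $2$, and gets the linear bound $2\left|w\right|\leq3^{\left|w\right|}$. This buys three things: a strictly stronger estimate; no use of Lemma \ref{lem:perturbation-of-unitaries} at all; and no need for the block-matrix comparison of $\rho\left(s\right)\oplus{\bf 1}_{\calH_{0}^{\perp}}$ with $\alpha\left(s\right)$, a step the paper states somewhat loosely --- its displayed inequality $\|\rho\left(s\right)\oplus{\bf 1}_{\calH_{0}^{\perp}}-\alpha\left(s\right)\|_{\F}^{2}\leq\|\rho\left(s\right)-\alpha_{0}\left(s\right)\|_{\F}^{2}+3$ can fail when $\langle\alpha\left(s\right)v_{n},v_{n}\rangle$ is near $-1$ (try $\alpha\left(s\right)=\diag\left(1,\dotsc,1,-1\right)$, for which $\rho\left(s\right)=I$ and the left side equals $4$), and the constant $2$ there is rescued only by the exact equality $\|\rho\left(s\right)-\alpha_{0}\left(s\right)\|_{\F}=1-\left|\langle\alpha\left(s\right)v_{n},v_{n}\rangle\right|$ furnished by the polar decomposition. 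The only thing the paper's route gains is that the final estimate becomes a black-box application of an already-proved lemma. The small points you lean on --- the substitution convention $\rho\left(s^{-1}\right)=\rho\left(s\right)^{-1}$, two-sided unitary invariance of $\|\cdot\|_{\F}$ for rectangular matrices, and $f^{*}f=I_{\calH_{0}}$ giving $\|f\circ B\|_{\F}=\|B\|_{\F}$ --- are all sound.
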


\begin{proof}
Define $\alpha_{0}:S\rightarrow\End_{\CC}\calH$ by $\alpha_{0}\left(s\right)=P_{0}\circ\alpha\left(s\right)\mid_{\calH_{0}}$,
where $P_{0}:\calH\rightarrow\calH_{0}$ is the orthogonal projection.
By Lemma \ref{lem:restriction-of-domain-and-range}, applied to $\alpha\left(s\right)$
for each $s\in S$ separately, there is a function $\rho:S\rightarrow\U\left(\calH_{0}\right)$,
such that
\begin{equation}
\|\alpha_{0}\left(s\right)-\rho\left(s\right)\|_{\F}\le1\label{eq:applied-unitary-correction}
\end{equation}
and if each $\alpha\left(s\right)$ permutes $\calB$, then $\rho$ can be chosen so that each $\rho\left(s\right)$
permutes $\calB_{0}$. In any case, Lemma \ref{lem:restriction-of-domain-and-range}
guarantees that
\[
\|\alpha\left(s^{-1}\right)\circ f\circ\rho\left(s\right)-f\|_{\F}\leq2
\]
for each $s\in S$. Let $s\in S$.
By considering the matrix representations of
$\rho\left(s\right)\oplus{\bf 1}_{\calH_{0}^{\perp}}$ and
$\alpha\left(s\right)$, and using the fact
that $\alpha\left(s\right)$ is unitary, we see that
\[
\|\rho\left(s\right)\oplus{\bf 1}_{\calH_{0}^{\perp}}-\alpha\left(s\right)\|_{\F}^{2}\leq\|\rho\left(s\right)-\alpha_{0}\left(s\right)\|_{\F}^{2}+3\,\,\text{.}
\]
Hence, from (\ref{eq:applied-unitary-correction}), we conclude that
\begin{align*}
\|\rho\left(s\right)\oplus{\bf 1}_{\calH_{0}^{\perp}}-\alpha\left(s\right)\|_{\F} & \leq\left(\|\rho\left(s\right)-\alpha_{0}\left(s\right)\|_{\F}^{2}+3\right)^{1/2}\leq2
\end{align*}
We would like to bound $\|\rho\left(\cdot\right)\oplus{\bf 1}_{\calH_{0}^{\perp}}-\alpha\left(\cdot\right)\|_{\F}$,
evaluated at a word $w\in\FF$, and so we also need to bound
$\|\rho\left(s\right)^{-1}\oplus{\bf 1}_{\calH_{0}^{\perp}}-\alpha\left(s\right)^{-1}\|_{\F}$.
But, in general $\|A^{-1}-B^{-1}\|_{\F}=\|A-B\|_{\F}$ for $A,B\in\U\left(\calH\right)$,
and so
\[
\|\rho\left(s\right)^{-1}\oplus{\bf 1}_{\calH_{0}^{\perp}}-\alpha\left(s^{-1}\right)\|_{\F}\leq2\,\,\text{.}
\]

Let $w\in\FF$. By Lemma \ref{lem:perturbation-of-unitaries}, the
above implies that
\[
\|\rho\left(w\right)\oplus{\bf 1}_{\calH_{0}^{\perp}}-\alpha\left(w\right)\|_{\F}\leq\left(2+1\right)^{\left|w\right|}=3^{\left|w\right|}\,\,\text{.}
\]
Assume further that $w\in\Ker\left(\pi\right)$. Then $\alpha\left(w\right)=I$,
and so,
\begin{align*}
\|\rho\left(w\right)-I\|_{\F} & =\|\rho\left(w\right)\oplus{\bf 1}_{\calH_{0}^{\perp}}-I\|_{\F}\\
 & \leq\|\rho\left(w\right)\oplus{\bf 1}_{\calH_{0}^{\perp}}-\alpha\left(w\right)\|_{\F}+\|\alpha\left(w\right)-I\|_{\F}\\
 & \leq3^{\left|w\right|}
\end{align*}
\end{proof}

Henceforth, given representations $\calH_1$ and $\calH_2$ of $\Gamma$, we treat
$\Hom_{\CC}\left(\calH_1,\calH_2\right)$ as a $\Gamma$-representation with the action given by
$g\cdot f=g\circ f\circ g^{-1}$ for $g\in\Gamma$ and $f\in\Hom_{\CC}\left(\calH_1,\calH_2\right)$.

\begin{prop}
\label{prop:inclusion-map-is-far-from-morphism}Let $\calH_{0}\subsetneq\calH$
be finite-dimensional complex Hilbert spaces, and write $f:\calH_{0}\rightarrow\calH$
for the inclusion map. Let $\alpha:\Gamma\rightarrow\U\left(\calH\right)$
and $\beta:\Gamma\rightarrow\U\left(\calH_{0}\right)$ be unitary
representations. Then,\renewcommand{\labelenumi}{\roman{enumi})}
\begin{enumerate}
\item If $\left(\calH,\alpha\right)$ is irreducible, then $\|f-h\|_{\F}=\|f\|_{\F}$
for every morphism of $\Gamma$-representations $h:\calH_{0}\rightarrow\calH$.
\item If $\calB_{0}\subset\calB$ are orthonormal bases for $\calH_{0},\calH$, respectively,
each $\beta\left(s\right)$ permutes $\calB_{0}$, each $\alpha\left(s\right)$ permutes $\calB$, and
the action $\Gamma{\curvearrowright^{\alpha}}\calB$ of $\Gamma$ on $\calB$
through $\alpha$ is transitive, then $\|f-h\|_{\F}\geq\frac{1}{\sqrt{2}}\cdot\|f\|_{\F}$
for every morphism of $\Gamma$-representations $h:\calH_{0}\rightarrow\calH$.
\end{enumerate}
\end{prop}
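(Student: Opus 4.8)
The plan is to recognise a morphism of $\Gamma$-representations $h:\calH_0\to\calH$ as precisely a $\Gamma$-fixed vector of $\Hom_\CC\left(\calH_0,\calH\right)$ under the action $g\cdot\phi=\alpha\left(g\right)\circ\phi\circ\beta\left(g\right)^{-1}$ introduced just above. The set $M$ of all such morphisms is a linear subspace, the Frobenius inner product $\langle\phi,\psi\rangle=\Tr\left(\phi^*\psi\right)$ is $\Gamma$-invariant (because $\alpha,\beta$ are unitary), and so the point of $M$ closest to $f$ is its orthogonal projection $E\left(f\right)$ onto $M$. Hence for every morphism $h$ one has $\|f-h\|_{\F}^2\ge\|f-E\left(f\right)\|_{\F}^2=\|f\|_{\F}^2-\|E\left(f\right)\|_{\F}^2$, and it suffices to control $\|E\left(f\right)\|_{\F}$. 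For part (i) this is immediate: if $\alpha$ is irreducible then the image of any morphism $h$ is an $\alpha$-invariant subspace of $\calH$, hence $0$ or $\calH$; it cannot be $\calH$ since $\dim\calH_0<\dim\calH$, so $h=0$ is the only morphism and $\|f-h\|_{\F}=\|f\|_{\F}$.

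For part (ii) I would pass to the permutation picture. Writing matrices with respect to $\calB_0$ (columns) and $\calB$ (rows), I identify $\Hom_\CC\left(\calH_0,\calH\right)$ with functions on $\calB\times\calB_0$; since $\alpha,\beta$ act by permutations, a morphism is exactly a function that is constant on the orbits of the diagonal $\Gamma$-action $g\cdot\left(x,y\right)=\left(\alpha\left(g\right)x,\beta\left(g\right)y\right)$ on $\calB\times\calB_0$, and $f$ is the function $f_{xy}=\delta_{xy}$. Minimising $\|f-h\|_{\F}^2=\sum_O\sum_{\left(x,y\right)\in O}|f_{xy}-h_O|^2$ over all orbit-constant $h$ then decouples into an independent scalar least-squares problem on each orbit $O$, whose minimum is $d_O\left(|O|-d_O\right)/|O|$, where $d_O$ is the number of diagonal pairs $\left(v,v\right)$, $v\in\calB_0$, lying in $O$. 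This is exactly $\|f-E\left(f\right)\|_{\F}^2$. Since $\|f\|_{\F}^2=m:=\dim\calH_0=\sum_O d_O$, the whole statement reduces to the per-orbit estimate $d_O\le|O|/2$, which would give $\|f-h\|_{\F}^2\ge\sum_O d_O\left(1-d_O/|O|\right)\ge\tfrac12\sum_O d_O=\tfrac12\|f\|_{\F}^2$.

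The estimate $d_O\le|O|/2$ is the heart of the matter, and it is where the hypotheses $\calB_0\subsetneq\calB$ and ``$\alpha$ transitive'' enter together. I would prove it using the second-coordinate projection $\pi_2:O\to\calB_0$. This map is $\Gamma$-equivariant onto the single $\beta$-orbit $\pi_2\left(O\right)\subseteq\calB_0$, so all of its fibres share a common size $s$ and $|O|=s\cdot|\pi_2\left(O\right)|$. On the one hand $d_O\le|\pi_2\left(O\right)|$, because distinct diagonal pairs have distinct second coordinates. On the other hand the first-coordinate projection of $O$ is onto all of $\calB$ by transitivity of $\alpha$, so $|O|\ge|\calB|=n>m\ge|\pi_2\left(O\right)|$; as $s=|O|/|\pi_2\left(O\right)|$ is a positive integer strictly greater than $1$, it is at least $2$. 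Hence $|O|=s\,|\pi_2\left(O\right)|\ge2\,|\pi_2\left(O\right)|\ge2d_O$, which is exactly $d_O\le|O|/2$.

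The only genuine obstacle is this last estimate with the sharp constant: the factor $\tfrac1{\sqrt2}$ is attained (already for $\Gamma=\ZZ/2\ZZ$ acting on two points), so no slack is available and one cannot afford the crude bound $d_O\le m$ coming from the first projection alone. The point to get right is that it is the \emph{second} projection that forces $s\ge2$, and this in turn relies on combining transitivity of $\alpha$ (to make $|O|\ge n$) with the strict inclusion $\calB_0\subsetneq\calB$ (to make $|\pi_2\left(O\right)|\le m<n$); the remaining ingredients are only the routine per-orbit least-squares computation and Schur's lemma.
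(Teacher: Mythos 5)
Your proof is correct and is essentially the paper's own argument: both identify morphisms of $\Gamma$-representations with functions constant on the $\Gamma$-orbits of the product of the two bases, realize the nearest morphism as the orbit-averaging projection of $f$, and reduce everything to the key fact that each orbit of the product is at least twice as large as its projection to $\calB_0$. The only cosmetic differences are that you prove this key fact by counting the equal-sized fibres of the equivariant projection onto the $\calB_0$-coordinate (via $|O|\geq|\calB|>|\calB_0|\geq|\pi_2(O)|$ and integrality of the fibre size), where the paper compares stabilizers and shows $\Stab_\Gamma\left(\left(b_0,b\right)\right)$ is a proper subgroup of $\Stab_\Gamma\left(b_0\right)$, and that you then finish with the per-orbit bound $d_O\leq|O|/2$ instead of the paper's grouped sum-of-squares estimate.
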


\begin{proof}
(i) Since $\dim_{\CC}\calH_{0}<\dim_{\CC}\calH$ and $\left(\calH,\alpha\right)$
is irreducible, Schur's Lemma implies the only morphism of representations
$\calH_{0}\rightarrow\calH$ is the zero morphism, and so the result
follows.

(ii) For $b_{0}\in\calB_{0}$ and $b\in\calB$, let $E_{b_{0},b}:\calH_{0}\rightarrow\calH$
be the linear map sending $b_{0}\mapsto b$, and sending every other
element of $\calB_{0}$ to zero. Then, $\left\{ E_{b_{0},b}\right\} _{\left(b_{0},b\right)\in\calB_{0}\times\calB}$
is a basis for $\Hom_{\CC}\left(\calH_{0},\calH\right)$. The inner
product for which $\left\{ E_{b_{0},b}\right\} _{\left(b_{0},b\right)\in\calB_{0}\times\calB}$
is an orthonormal basis makes the $\Gamma$-representation $\Hom_{\CC}\left(\calH_{0},\calH\right)$
unitary. The group $\Gamma$ acts on $\calB_{0}\times\calB$ by
$\gamma\cdot\left(b_0,b\right)=\left(\gamma\cdot b_0,\gamma\cdot b\right)$.
A map $T\in\Hom_{\CC}\left(\calH_{0},\calH\right)$,
represented as $T=\sum_{\left(b_{0},b\right)\in\calB_{0}\times\calB}\lambda_{b_{0},b}\cdot E_{b_{0},b}$,
is a morphism of representations if and only if the mapping $\left(b_{0},b\right)\mapsto\lambda_{b_{0},b}$
is constant on each $\Gamma$-orbit of $\calB_{0}\times\calB$.

Let $\calO^{\calB_0}_1,\dotsc,\calO^{\calB_0}_k$ be the orbits of the action
$\Gamma\curvearrowright\calB_0$.
Take an orbit $\calO$ of the action $\Gamma\curvearrowright\calB_0\times\calB$.
Then, there is a unique $1\leq i\leq k$ such that $\calO$ is contained in $\calO^{\calB_0}_i\times \calB$.
We
claim that $\left|\calO\right|\geq 2\cdot\left|\calO^{\calB_0}_i\right|$.
Indeed, let $\left(b_{0},b\right)\in\calO$. Then,
\begin{equation}
\Stab_{\Gamma}\left(\left(b_{0},b\right)\right)=\Stab_{\Gamma}\left(b_{0}\right)\cap\Stab_{\Gamma}\left(b\right)\leq\Stab_{\Gamma}\left(b_{0}\right)\,\,\text{.}\label{eq:inclusion-of-stabs}
\end{equation}
The action $\Gamma{\curvearrowright^{\alpha}}\calB$ is transitive,
and so $\left|\calB\right|=\left[\Gamma:\Stab_{\Gamma}\left(b\right)\right]$.
Thus,
\[
\left[\Gamma:\Stab_{\Gamma}\left(b_{0}\right)\right]\leq\left|\calB_{0}\right|<\left|\calB\right|=\left[\Gamma:\Stab_{\Gamma}\left(b\right)\right]\,\,\text{.}
\]
In particular, $\Stab_{\Gamma}\left(b_{0}\right)$ is not a subgroup
of $\Stab_{\Gamma}\left(b\right)$, and so the inclusion in (\ref{eq:inclusion-of-stabs})
is strict. Hence,
\begin{align}
\left|\calO\right| &=
 \left[\Gamma:\Stab_{\Gamma}\left(\left(b_{0},b\right)\right)\right]\nonumber \\
 & =\left[\Gamma:\Stab_{\Gamma}\left(b_{0}\right)\right]\cdot\left[\Stab_{\Gamma}\left(b_{0}\right):\Stab_{\Gamma}\left(\left(b_{0},b\right)\right)\right]\nonumber \\
 & \geq2\cdot\left[\Gamma:\Stab_{\Gamma}\left(b_{0}\right)\right]\nonumber \\
 & =2\cdot\left|\calO^{\calB_{0}}_i\right|\,\,\text{,}
 \label{eq:orbits-are-large}
\end{align}
as claimed.

For each $\Gamma$-orbit $\calO$ of $\calB_{0}\times\calB$,
let $c\left(\calO\right)$ be the number of elements $\left(b_{0},b\right)\in \calO$
for which $f\left(b_{0}\right)=b$, i.e.,
$c\left(\calO\right)=|\calO\cap\left\{\left(b_{0},b_{0}\right)\mid b_{0}\in\calB_{0}\right\}|$.
Then, $\sum_{\calO}c\left(\calO\right)=\left|\calB_{0}\right|$,
and for each $1\leq i\leq k$,
$\sum_{\calO\subset \calO^{\calB_0}_i\times\calB}c\left(\calO\right)=\left|\calO^{\calB_{0}}_i\right|$.
Let $h:\calH_{0}\rightarrow\calH$ be the result of applying the orthogonal projection
$\Hom_{\CC}\left(\calH_{0},\calH\right)\longrightarrow
\Hom_{\CC\Gamma}\left(\calH_{0},\calH\right)$
to the given inclusion map 
$f:\calH_0\rightarrow\calH$. Then, $h$
is the morphism of representations which is closest to $f$ under
$\|\cdot\|_{\F}$. 
The $\left\{ E_{b_{0},b}\right\} _{\left(b_{0},b\right)\in\calB_{0}\times\calB}$-coefficients of $h$,
which are constant on each $\Gamma$-orbit of $\calB_{0}\times\calB$,
are obtained by taking the average of the coefficients of $f$ in each
$\Gamma$-orbit separately. Write $f=\sum_{\left(b_{0},b\right)\in\calB_{0}\times\calB}\lambda_{b_{0},b}\cdot E_{b_{0},b}$
and $h=\sum_{\left(b_{0},b\right)\in\calB_{0}\times\calB}\mu_{b_{0},b}\cdot E_{b_{0},b}$.
Then, for each $\Gamma$-orbit $\calO$ of $\calB_{0}\times\calB$, the
map $\lambda:\calO\rightarrow\CC$, defined by $\lambda\left(b_{0},b\right)=\lambda_{b_{0},b}$,
has, in its image, $c\left(\calO\right)$ $1$-s and $\left(\left|\calO\right|-c\left(\calO\right)\right)$
$0$-s, while the map $\mu:\calO\rightarrow\CC$, defined by $\mu\left(b_{0},b\right)=\mu_{b_{0},b}$,
is constant, mapping all elements to $\frac{c\left(\calO\right)}{\left|\calO\right|}$.
So, writing $\sum_{\calO}$ for a sum that runs over all $\Gamma$-orbits
$\calO$ of $\calB_0\times\calB$, we deduce that
\begin{align*}
\|f-h\|_{\F}^{2} & =\sum_{\calO}\left(c\left(\calO\right)\cdot\left(1-\frac{c\left(\calO\right)}{\left|\calO\right|}\right)^{2}+\left(\left|\calO\right|-c\left(\calO\right)\right)\cdot\left(0-\frac{c\left(\calO\right)}{\left|\calO\right|}\right)^{2}\right)\\
 & =\sum_{\calO}c\left(\calO\right)-\sum_{\calO}\frac{c\left(\calO\right)^{2}}{\left|\calO\right|}
 =\left|\calB_0\right|-\sum_{\calO}\frac{c\left(\calO\right)^{2}}{\left|\calO\right|}\\
\end{align*}
But, using Inequality (\ref{eq:orbits-are-large}), we deduce that
\begin{align*}
\sum_{\calO}\frac{c\left(\calO\right)^{2}}{\left|\calO\right|}=&
\sum_{i=1}^{k}\sum_{\calO\subset \calO^{\calB_0}_i\times\calB}\frac{c\left(\calO\right)^{2}}{\left|\calO\right|} \leq
\sum_{i=1}^{k}\sum_{\calO\subset \calO^{\calB_0}_i\times\calB}\frac{c\left(\calO\right)^{2}}{2\cdot\left|\calO^{\calB_{0}}_i\right|}\\ \leq &
\frac{1}{2}\cdot\sum_{i=1}^{k}\frac{1}{\left|\calO^{\calB_{0}}_i\right|}
\cdot\left(
\sum_{\calO\subset \calO^{\calB_0}_i\times\calB}c\left(\calO\right)
\right)^2
\\ = &
\frac{1}{2}\cdot\sum_{i=1}^{k}\left|\calO^{\calB_{0}}_i\right|
 = 
\frac{1}{2}\cdot\left|\calB_0\right|
\\
\end{align*}
Thus,
\begin{align*}
\|f-h\|_{\F}^{2} \geq & \frac{1}{2}\cdot\left|\calB_{0}\right|
 = \frac{1}{2}\cdot\|f\|_{\F}^{2}\,\,,
\end{align*}
and so, taking square roots finishes the proof.
\end{proof}
We recall the definition of Kazhdan's Property $\T$ (see Section
1.1 of \cite{bhv}). Let $Q\subset\Gamma$ and $\kappa>0$. Recall
that for a unitary representation $\left(\calH,\rho\right)$ of $\Gamma$
and a nonzero vector $v\in\calH$, we say that $v$ is \emph{$\left(Q,\kappa\right)$-invariant}
if $\sup_{x\in Q}\|\rho\left(x\right)\cdot v-v\|<\kappa\cdot\|v\|$. We
say that $\left(Q,\kappa\right)$ is a \emph{Kazhdan pair} for $\Gamma$
if every unitary representation $\left(\calH,\rho\right)$ of $\Gamma$
satisfies:
\begin{align}
\text{} & \text{if \ensuremath{\calH} contains a \ensuremath{\left(Q,\kappa\right)}-invariant vector,}\nonumber \\
 & \text{then it also contains a \ensuremath{\Gamma}-invariant nonzero vector.}\label{eq:property-t}
\end{align}
We say that the group $\Gamma$ has \emph{Kazhdan's Property $\T$}
if it has a Kazhdan pair $\left(Q,\kappa\right)$ for which $Q$ is
finite (and $\kappa>0$). Every discrete group with Property $\T$ is finitely-generated \cites{kaz67,bhv}.
If $\Gamma$ has Property $\T$, then for every finite generating set $Q$
of $\Gamma$, there is $\kappa>0$ for which $\left(Q,\kappa\right)$
is a Kazhdan pair for $\Gamma$, and we call such $\kappa$ a \emph{Kazhdan
constant} for $\left(\Gamma,Q\right)$.
\begin{lem}
\label{lem:under-t-almost-morphism-is-close-to-morphism}Assume that
$\Gamma$ has Property $\T$ with Kazhdan constant $\kappa>0$ for
$\left(\Gamma,S^\pm\right)$. Let $\left(\calH_{1},\alpha\right)$ and
$\left(\calH_{2},\beta\right)$ be finite-dimensional unitary representations
of $\Gamma$. Let $\epsilon>0$, and let $f:\calH_{1}\rightarrow\calH_{2}$
be a nonzero linear map, such that for each $s\in S$,
\[
\|\alpha\left(s^{-1}\right)\circ f\circ\beta\left(s\right)-f\|<\epsilon\cdot\|f\|
\]
Then, there is a morphism $h:\calH_{1}\rightarrow\calH_{2}$ of $\Gamma$-representations,
such that
\[
\|f-h\|<\frac{\epsilon}{\kappa}\cdot\|f\|
\]
\end{lem}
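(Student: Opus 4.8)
The plan is to read the hypothesis as \emph{almost-invariance} of $f$ inside a single unitary $\Gamma$-representation, and then to extract a genuine morphism using Property $\T$. Set $V:=\Hom_\CC(\calH_1,\calH_2)$, equipped with the Frobenius inner product $\langle S,T\rangle=\Tr(T^*S)$, so that $\|\cdot\|$ is $\|\cdot\|_\F$ and the $\Gamma$-action $g\cdot f=g\circ f\circ g^{-1}$ recalled above is \emph{unitary}: multiplying on either side by the unitaries $\alpha(g),\beta(g)$ leaves $\|\cdot\|_\F$ unchanged. The structural fact I would use is that $f\in V$ is $\Gamma$-fixed precisely when it intertwines the two representations, so the subspace $V^\Gamma$ of $\Gamma$-invariant vectors coincides with $\Hom_{\CC\Gamma}(\calH_1,\calH_2)$, the morphisms we are hunting for.

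Next I would rewrite the hypothesis in these terms. With respect to the action above, $\alpha(s^{-1})\circ f\circ\beta(s)=s^{-1}\cdot f$, so the assumption says $\|s^{-1}\cdot f-f\|<\epsilon\|f\|$ for every $s\in S$; since the action is unitary, $\|s\cdot f-f\|=\|s^{-1}\cdot f-f\|$, and therefore
\[
\sup_{s\in S^\pm}\|s\cdot f-f\|<\epsilon\cdot\|f\|.
\]
Thus $f$ is an $(S^\pm,\epsilon)$-almost-invariant vector, but with the crucial feature that the defect is bounded by $\epsilon\|f\|$ rather than by $\epsilon$ times the norm of the part of $f$ that is not already invariant.

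I would then decompose $V=V^\Gamma\oplus(V^\Gamma)^\perp$. Because the action is unitary and $V^\Gamma$ is $\Gamma$-stable, the complement $(V^\Gamma)^\perp$ is again a subrepresentation, and it contains no nonzero $\Gamma$-invariant vector. Let $h$ be the orthogonal projection of $f$ onto $V^\Gamma$; then $h$ is the morphism of representations nearest to $f$, and $f':=f-h$ lies in $(V^\Gamma)^\perp$. Since every vector of $V^\Gamma$ is fixed, $s\cdot h=h$, whence $s\cdot f-f=s\cdot f'-f'$; consequently $f'$ carries over the same bound,
\[
\sup_{s\in S^\pm}\|s\cdot f'-f'\|<\epsilon\|f\|.
\]

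Finally I would invoke Property $\T$ on the subrepresentation $(V^\Gamma)^\perp$. If $f'=0$ then $f=h$ is already a morphism and there is nothing to prove, so assume $f'\neq0$. As $(V^\Gamma)^\perp$ has no nonzero $\Gamma$-invariant vector, the Kazhdan-pair property for $(S^\pm,\kappa)$ (in contrapositive form) forbids $f'$ from being $(S^\pm,\kappa)$-invariant, i.e.\ $\sup_{s\in S^\pm}\|s\cdot f'-f'\|\ge\kappa\|f'\|$. Combining the two displays gives $\kappa\|f'\|<\epsilon\|f\|$, that is $\|f-h\|=\|f'\|<\tfrac{\epsilon}{\kappa}\|f\|$, which is the claim. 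The only real obstacle is the first paragraph: one must confirm that the $\Gamma$-action on $\Hom_\CC(\calH_1,\calH_2)$ is unitary for the Frobenius norm, as this is exactly what makes $(V^\Gamma)^\perp$ a bona fide subrepresentation to which Property $\T$ applies, and what lets the almost-invariance of $f$ descend without loss to its non-invariant component $f'$.
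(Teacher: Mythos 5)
Your proof is correct and follows the same route as the paper: both view $f$ as an almost-invariant vector in the unitary $\Gamma$-representation $\Hom_{\CC}\left(\calH_{1},\calH_{2}\right)$ and identify invariant vectors with morphisms. The only difference is that where the paper simply cites Remark 1.1.10 of \cite{bhv} to pass from an $\left(S^{\pm},\epsilon\right)$-invariant vector to a nearby invariant one, you reprove that remark inline via the orthogonal decomposition $V=V^{\Gamma}\oplus\left(V^{\Gamma}\right)^{\perp}$ and the contrapositive of the Kazhdan-pair property.
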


\begin{proof}
The map $f$ is an $\left(S,\epsilon\right)$-invariant vector in
the representation $\Hom_{\CC}\left(\calH_{1},\calH_{2}\right)$ of
$\Gamma$. So, there is a $\Gamma$-invariant linear map $h\in\Hom_{\CC}\left(\calH_{1},\calH_{2}\right)$
such that $\|f-h\|<\frac{\epsilon}{\kappa}\cdot\|f\|$ (see Remark
1.1.10 of \cite{bhv}). The invariance of $h$ is equivalent to $h$
being a morphism of $\Gamma$-representations.
\end{proof}
We are now ready to prove the main theorem:
\begin{proof}
[Proof of Theorem \ref{thm:intro-stable-fi-subgroups}]
Before we begin,
note that for each $n\in\NN$, the group $\Gamma$ has only finitely many finite-index
subgroups of index $n$ because $\Gamma$ is finitely-generated. Since,
in addition, $\Gamma$ has Property $\T$, it has only finitely many irreducible
unitary representations of any given dimension $n\in\NN$ (up to isomorphism).
For the last assertion, see Theorem 2.6 of \cite{wang}, or Corollary
3 of \cite{wasserman} for a more quantitative proof, or Proposition
IV of \cite{hrv} for an explicit upper bound on the number of representations.
In any case, it can be proved by a simple application of Lemma \ref{lem:under-t-almost-morphism-is-close-to-morphism},
together with the compactness of $\U\left(n\right)^{\left|S\right|}$
and Schur's Lemma.

Let $\kappa>0$ be a Kazhdan constant for $\Gamma$ with respect to
$S^\pm$. First, assume that $\Gamma$ is $\calP$-stable. Assume, for the
sake of contradiction, that $\Gamma$ has infinitely many finite-index
subgroups, and let $\left\{ \Lambda_{n}\right\} _{n=1}^{\infty}$
be a sequence of such subgroups for which $\left[\Gamma:\Lambda_{n}\right]\rightarrow\infty$.
Fix $n\in\NN$. Denote $\calB_{n}=\Gamma/\Lambda_{n}=\left\{ x_{1},\dotsc,x_{k}\right\} $,
where $k=\left[\Gamma:\Lambda_{n}\right]$. Write $\alpha_{n}:\Gamma\rightarrow\U$$\left(\CC\left[\calB_{n}\right]\right)$
for the permutation representation produced by the action of $\Gamma$
on $\calB_{n}$ by multiplication from the left. Write $\calB_{n}^{0}=\left\{ x_{1},\dotsc,x_{k-1}\right\} $,
and let $f_n:\CC\left[\calB_{n}^{0}\right]\rightarrow\CC\left[\calB_{n}\right]$
be the inclusion map. By Proposition \ref{prop:restriction-is-almost-solution-with-almost-morphism}(ii),
there is a function $\rho_{n}:S\rightarrow\U\left(\CC\left[\calB_{n}^{0}\right]\right)$,
such that:
\begin{align}
\forall s\in S & \,\,\,\,\text{\ensuremath{\rho_{n}\left(s\right)} permutes \ensuremath{\calB_{n}^{0}}}\label{eq:rho_n-permutes}\\
\forall w\in\Ker\left(\pi\right) & \,\,\,\,\|\rho_{n}\left(w\right)-I\|_{\F}\leq3^{\left|w\right|}\label{eq:rho_n-Frobenius-small}\\
\forall s\in S & \,\,\,\,\|\alpha_{n}\left(s^{-1}\right)\circ f_{n}\circ\rho_{n}\left(s\right)-f_{n}\|_{\F}\leq2\label{eq:almost-equivariance-1}
\end{align}
Inequality (\ref{eq:rho_n-Frobenius-small}) is equivalent to
\begin{align}
\forall w\in\Ker\left(\pi\right) & \,\,\,\,\|\rho_{n}\left(w\right)-I\|_{\HS}\leq\frac{3^{\left|w\right|}}{\left|\calB_{n}^{0}\right|^{1/2}}\,\,.\label{eq:rho_n-HS-small}
\end{align}

From (\ref{eq:rho_n-permutes}) and (\ref{eq:rho_n-HS-small}), we
see that $\left(\rho_{n}\right)_{n=1}^{\infty}$ is a $\calP$-stability-challenge
for $\Gamma$. Since $\Gamma$ is $\calP$-stable, there is a solution
$\left(\tilde{\rho}_{n}\right)_{n=1}^{\infty}$ for $\left(\rho_{n}\right)_{n=1}^{\infty}$,
where $\tilde{\rho}_n:S\rightarrow \U\left(\CC\left[\calB^{0}_{n}\right]\right)$
and $\tilde{\rho}_n\left(s\right)$ is a permutation matrix for each $s\in S$.
We may extend each $\tilde{\rho}_{n}:S\rightarrow\U\left(\CC\left[\calB^{0}_{n}\right]\right)$
to a representation $\tilde{\rho}_{n}:\Gamma\rightarrow\U\left(\CC\left[\calB^{0}_{n}\right]\right)$.
Now, for each $s\in S$,
\begin{align*}
\frac{1}{\|f_{n}\|_{\F}}\cdot\|\alpha_{n}\left(s^{-1}\right)\circ f_{n}\circ\tilde{\rho}_{n}&\left(s\right)-f_{n}\|_{\F}\\
\leq & \left|\calB_{n}^{0}\right|^{-1/2}\cdot\|\alpha_{n}\left(s^{-1}\right)\circ f_{n}\circ\rho_{n}\left(s\right)-f_{n}\|_{\F}\\
 & +\left|\calB_{n}^{0}\right|^{-1/2}\cdot\|\alpha_{n}\left(s^{-1}\right)\circ f_{n}\circ\left(\tilde{\rho}_{n}\left(s\right)-\rho_{n}\left(s\right)\right)\|_{\F}\\
\leq & \left|\calB_{n}^{0}\right|^{-1/2}\cdot2+\|\tilde{\rho}_{n}\left(s\right)-\rho_{n}\left(s\right)\|_{\HS}\,\,\text{,}
\end{align*}
where the last inequality follows from (\ref{eq:almost-equivariance-1})
and the fact that $\alpha_{n}\left(s^{-1}\right)$ and $f_{n}$ are
unitary. Since $\left(\tilde{\rho}_{n}\right)_{n=1}^{\infty}$
is a solution for $\left(\rho_{n}\right)_{n=1}^{\infty}$, we deduce
that
\[
\frac{1}{\|f_{n}\|_{\F}}\cdot\|\alpha_{n}\left(s^{-1}\right)\circ f_{n}\circ\tilde{\rho}_{n}\left(s\right)-f_{n}\|_{\F}\overset{n\rightarrow\infty}{\longrightarrow}0\,\,\text{.}
\]
Thus, by Lemma \ref{lem:under-t-almost-morphism-is-close-to-morphism},
there are morphisms of representations $\left(h_{n}\right)_{n=1}^{\infty}$,
$h_{n}:\CC\left[\calB_{n}^{0}\right]\rightarrow\CC\left[\calB_{n}\right]$,
such that
\[
\frac{1}{\|f_{n}\|_{\F}}\cdot\|f_{n}-h_{n}\|_{\F}\rightarrow0\,\,\text{,}
\]
in contradiction with Proposition \ref{prop:inclusion-map-is-far-from-morphism}(ii).
This finishes the proof under the assumption that $\Gamma$ is $\calP$-stable.

Now, assume that $\Gamma$ is $\HS$-stable rather than $\calP$-stable.
Arguing as above,
using Propositions \ref{prop:restriction-is-almost-solution-with-almost-morphism}(i)
and \ref{prop:inclusion-map-is-far-from-morphism}(i)
instead of Propositions \ref{prop:restriction-is-almost-solution-with-almost-morphism}(ii)
and \ref{prop:inclusion-map-is-far-from-morphism}(ii),
respectively, we deduce that $\Gamma$ has only finitely many irreducible
finite-dimensional representations up to isomorphism.
Assume, for the sake of contradiction,
that $\Gamma$ has infinitely many subgroups of finite-index, and
let $\left\{ \Lambda_{n}\right\} _{n=1}^{\infty}$ be a sequence of
such subgroups, for which $\left[\Gamma:\Lambda_{n}\right]\rightarrow\infty$.
Write $\Lambda_{0}=\Gamma$. We may assume, without loss of generality,
that the subgroups $\left\{ \Lambda_{n}\right\} _{n=1}^{\infty}$
are normal in $\Gamma$, and that $\Lambda_{n}\subsetneq\Lambda_{n-1}$
for all $n\in\NN$. Fix $n\in\NN$. Take $\gamma_{n}\in\Lambda_{n-1}\setminus\Lambda_{n}$.
The regular representation $\CC\left[\Gamma/\Lambda_{n}\right]$ of
the finite group $\Gamma/\Lambda_{n}$ is faithful, and it decomposes
as a direct sum of irreducible representations of $\Gamma$. So, for
at least one of these irreducible representations, call it $V_{n}$,
$\gamma_{n}$ does not act on $V_{n}$ as the identity. But $\gamma_{n}\in\Lambda_{n-1}$,
and so it acts as the identity on $V_{i}$ for each $1\leq i<n$.
Therefore, we produced a sequence $\left\{ V_{n}\right\} _{n=1}^{\infty}$
of pairwise non-isomorphic finite-dimensional irreducible representations
of $\Gamma$, a contradiction.
\end{proof}

\section{\label{sec:AutFn-MCG}The non-$\calP$-stability of $\Aut\left(\FF_n\right)$, $\Out\left(\FF_n\right)$ and $\MCG\left(g\right)$}
In this section, we focus on $\calP$-stability, and write $d_{n}$
for $d_{n}^{\Hamming}.$

\begin{thm}\label{thm:aut-mcg}
~
\renewcommand{\labelenumi}{\roman{enumi})}
\begin{enumerate}
	\item For $n\geq3$, both $\Aut\left(\FF_n\right)$ and $\Out\left(\FF_n\right)$ are not $\calP$-stable.
	\item For $g\geq3$, the mapping class group $\MCG\left(g\right)$
	of a closed orientable surface $\Sigma_g$ of genus $g$
	is not $\calP$-stable.
\end{enumerate}
\end{thm}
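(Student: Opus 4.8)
The plan is to derive both statements from Theorem \ref{thm:intro-sofic-hyperlinear} together with Proposition \ref{prop:P-stability-and-quotients}. The unifying idea is that each of the groups in question admits a natural surjection onto an infinite, sofic group with Property $\T$, and that the kernel of this surjection is finitely-generated. By Theorem \ref{thm:intro-sofic-hyperlinear} the target of each surjection is not $\calP$-stable, and the contrapositive of Proposition \ref{prop:P-stability-and-quotients} then propagates non-$\calP$-stability back to the original group: if $\Gamma/N$ is not $\calP$-stable for some finitely-generated normal subgroup $N\trianglelefteq\Gamma$, then $\Gamma$ is not $\calP$-stable.

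For part (i), I would use the action on the abelianization $\FF_n^{\mathrm{ab}}\cong\ZZ^n$, which yields a surjection $\Aut\left(\FF_n\right)\twoheadrightarrow\GL_n\left(\ZZ\right)$; since inner automorphisms act trivially on $\FF_n^{\mathrm{ab}}$, the same map descends to a surjection $\Out\left(\FF_n\right)\twoheadrightarrow\GL_n\left(\ZZ\right)$. For $n\geq 3$ the group $\GL_n\left(\ZZ\right)$ is infinite, is finitely-generated linear (hence sofic), and has Property $\T$, as $\SL_n\left(\ZZ\right)$ does by Kazhdan's theorem and it sits inside $\GL_n\left(\ZZ\right)$ with finite index. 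Thus $\GL_n\left(\ZZ\right)$ is not $\calP$-stable by Theorem \ref{thm:intro-sofic-hyperlinear}. The kernel of $\Aut\left(\FF_n\right)\twoheadrightarrow\GL_n\left(\ZZ\right)$ is the group $\mathrm{IA}_n$ of IA-automorphisms, which is finitely generated by a classical theorem of Magnus; its image in $\Out\left(\FF_n\right)$, being a quotient of $\mathrm{IA}_n$, is finitely generated as well. Taking $N$ to be this kernel in each case, the contrapositive of Proposition \ref{prop:P-stability-and-quotients} shows that neither $\Aut\left(\FF_n\right)$ nor $\Out\left(\FF_n\right)$ is $\calP$-stable.

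For part (ii), I would use the symplectic representation of $\MCG\left(g\right)$ on $H_1\left(\Sigma_g;\ZZ\right)\cong\ZZ^{2g}$, which preserves the intersection form and produces a surjection $\MCG\left(g\right)\twoheadrightarrow\Sp_{2g}\left(\ZZ\right)$ with kernel the Torelli group $\mathcal{I}_g$. For $g\geq 2$ the higher-rank lattice $\Sp_{2g}\left(\ZZ\right)$ has Property $\T$, and it is infinite, linear and sofic, so by Theorem \ref{thm:intro-sofic-hyperlinear} it is not $\calP$-stable. For $g\geq 3$, Johnson's theorem guarantees that $\mathcal{I}_g$ is finitely generated, so taking $N=\mathcal{I}_g$ the contrapositive of Proposition \ref{prop:P-stability-and-quotients} applies and shows that $\MCG\left(g\right)$ is not $\calP$-stable.

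The routine ingredients here (surjectivity of the abelianization and symplectic representations, Property $\T$ for $\SL_n\left(\ZZ\right)$ and $\Sp_{2g}\left(\ZZ\right)$, and the soficity coming from linearity) are all standard, and the essential structural engine is already provided by the two cited results. The genuine obstacle, and the source of the hypotheses $n\geq 3$ and $g\geq 3$, is the finite generation of the kernels: for $\mathrm{IA}_n$ this is Magnus's theorem, valid for all $n$ with the rank constraint entering only through Property $\T$ of the target; for the Torelli group it is Johnson's theorem, and the restriction to $g\geq 3$ is forced precisely because finite generation of $\mathcal{I}_g$ fails at $g=2$ (where $\mathcal{I}_2$ is infinitely generated, by Mess). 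I expect this reliance on the deep finite-generation results to be the only real point of substance; the rest is formal bookkeeping on top of Theorem \ref{thm:intro-sofic-hyperlinear} and Proposition \ref{prop:P-stability-and-quotients}.
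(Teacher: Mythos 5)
Your proof is correct and follows essentially the same route as the paper: the same epimorphisms onto $\GL_n\left(\ZZ\right)$ and $\Sp_{2g}\left(\ZZ\right)$, non-$\calP$-stability of these targets via Property $\T$ and soficity, finite generation of the Torelli-type kernels (Magnus, Johnson), and the contrapositive of Proposition \ref{prop:P-stability-and-quotients}. The only point worth making explicit (which the paper also leaves implicit) is that Proposition \ref{prop:P-stability-and-quotients} requires the ambient group to be finitely-presented, which holds for $\Aut\left(\FF_n\right)$, $\Out\left(\FF_n\right)$ and $\MCG\left(g\right)$ by classical results.
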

The proof uses the following proposition, which is a special case of Proposition A.3 of \cite{BM}.
\begin{prop}\label{prop:P-stability-and-quotients}
	Assume that $\Gamma$ is finitely-presented and $\calP$-stable,
	and let $N$ be a normal subgroup of $\Gamma$. If $N$ is a finitely-generated
	group, then $\Gamma/N$ is $\calP$-stable.
\end{prop}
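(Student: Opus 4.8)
The plan is to pull a stability-challenge for $\Gamma/N$ back to $\Gamma$, solve it there using the $\calP$-stability of $\Gamma$, and then \emph{repair} the resulting homomorphisms so that they factor through $\Gamma/N$ while moving each generator only slightly. Let $\pi':\FF\to\Gamma/N$ denote the composition of $\pi:\FF\to\Gamma$ with the quotient $\Gamma\to\Gamma/N$, so that $\Ker(\pi)\subseteq\Ker(\pi')=\pi^{-1}(N)$. Given a $\calP$-stability-challenge $(f_k)_{k=1}^\infty$ for $\Gamma/N$, with $f_k:S\to\Sym(n_k)$, we have $d_{n_k}(f_k(w),1)\to 0$ for every $w\in\Ker(\pi')$; since $\Ker(\pi)\subseteq\Ker(\pi')$, the very same sequence is automatically a $\calP$-stability-challenge for $\Gamma$. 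As $\Gamma$ is $\calP$-stable, there is a solution $(\phi_k)_{k=1}^\infty$, where each $\phi_k:S\to\Sym(n_k)$ extends to a homomorphism $\phi_k:\Gamma\to\Sym(n_k)$ and $\sum_{s\in S}d_{n_k}(f_k(s),\phi_k(s))\to 0$. What remains is to correct each $\phi_k$ to a homomorphism that annihilates $N$.

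Here the finite generation of $N$ enters decisively. Write $N=\langle a_1,\dots,a_m\rangle$ as a group, with $a_i=\pi(w_i)$ for words $w_i\in\FF$, and note $w_i\in\Ker(\pi')$. Bi-invariance of the Hamming metric yields the subadditive word estimate $d_{n_k}(\phi_k(w_i),f_k(w_i))\le |w_i|\cdot\max_{s\in S}d_{n_k}(f_k(s),\phi_k(s))\to 0$, and since $d_{n_k}(f_k(w_i),1)\to 0$ we obtain $d_{n_k}(\phi_k(a_i),1)\to 0$ for each $i$. Letting $\phi_k(\Gamma)$ act on $[n_k]=\{1,\dots,n_k\}$, put $Y_k=\bigcap_{i=1}^m\Fix(\phi_k(a_i))=\Fix(\phi_k(N))$. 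As $N$ is normal, $Y_k$ is $\Gamma$-invariant, and $N$ acts trivially on $Y_k$, so $\Gamma$ acts on $Y_k$ through $\Gamma/N$. Crucially, $|[n_k]\setminus Y_k|\le\sum_{i=1}^m n_k\cdot d_{n_k}(\phi_k(a_i),1)=o(n_k)$, so $N$ fixes all but a vanishing fraction of the points.

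Finally, I would define a homomorphism $\psi_k:\Gamma/N\to\Sym([n_k])$ that acts as $\phi_k$ on the invariant set $Y_k$ and as the identity on $[n_k]\setminus Y_k$; this is well defined precisely because $\phi_k|_{Y_k}$ factors through $\Gamma/N$. Since $\psi_k(s)$ and $\phi_k(s)$ agree on $Y_k$, they differ only on $[n_k]\setminus Y_k$, so $d_{n_k}(\psi_k(s),\phi_k(s))\le |[n_k]\setminus Y_k|/n_k\to 0$ for each $s\in S$; combined with $\sum_{s}d_{n_k}(f_k(s),\phi_k(s))\to 0$, the triangle inequality gives $\sum_{s\in S}d_{n_k}(f_k(s),\psi_k(s))\to 0$, so $\psi_k|_S$ is a genuine solution of the original challenge. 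The main obstacle is precisely this repair step: one cannot in general annihilate the normal subgroup $\phi_k(N)$ by a small perturbation, and it is the finite generation of $N$ that forces the bad set $[n_k]\setminus Y_k$ to have vanishing density, while normality makes $Y_k$ a $\Gamma/N$-set — passing to the fixed-point set and extending trivially is exactly the correction that stays close to $f_k$. I would note in passing that this argument uses only the $\calP$-stability of $\Gamma$ and the finite generation of $N$, and not the finite presentation of $\Gamma$.
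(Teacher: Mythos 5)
Your proof is correct, and it is worth noting that the paper itself gives no proof of Proposition \ref{prop:P-stability-and-quotients}: it quotes it as a special case of Proposition A.3 of \cite{BM}. Your argument is the natural direct one in this paper's sequential formulation of stability: pull the challenge for $\Gamma/N$ back along $\Ker(\pi)\subseteq\Ker(\pi')$, solve it in $\Gamma$ by $\calP$-stability, note that the finitely many group generators $a_1,\dotsc,a_m$ of $N$ satisfy $d_{n_k}(\phi_k(a_i),\mathbf{1})\to 0$ (by the bi-invariant word estimate together with $w_i\in\Ker(\pi')$), pass to $Y_k=\bigcap_{i=1}^{m}\Fix(\phi_k(a_i))=\Fix(\phi_k(N))$, and extend the induced $\Gamma/N$-action trivially off $Y_k$. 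All the estimates check out, and the places where your hypotheses enter are exactly the right ones: the identification $\bigcap_{i}\Fix(\phi_k(a_i))=\Fix(\phi_k(N))$ uses that $N$ is finitely generated \emph{as a group} and not merely as a normal subgroup (with only normal generation this set would be neither $\Fix(\phi_k(N))$ nor $\phi_k(\Gamma)$-invariant, and the union bound would run over infinitely many conjugates); normality of $N$ gives the $\phi_k(\Gamma)$-invariance of $Y_k$; and the union bound over the fixed number $m$ of generators gives $\left|[n_k]\setminus Y_k\right|=o(n_k)$. Your closing observation is also accurate and is the main payoff of your route: in the sequential framework of this paper, finite presentability of $\Gamma$ is never used, so your argument in fact proves the stronger statement (with $\Gamma$ only finitely generated) that the paper records in the remark following the proposition and attributes to the forthcoming work \cite{BLtesting}. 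The finite-presentation hypothesis in the statement as quoted is inherited from \cite{BM}, where stability is formulated quantitatively through a finite set of defining relations, so finiteness of the presentation is built into the definitions there rather than being needed for the argument you give.
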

\begin{rem}
	Proposition \ref{prop:P-stability-and-quotients} is true also in a stronger
	form, where $\Gamma$ is only assumed to be finitely-generated rather than finitely-presented \cite{BLtesting}. We do not need this stronger form here.
\end{rem}
\begin{proof}[Proof of Theorem \ref{thm:aut-mcg}]
Let $n\geq3$ and $g\geq3$. The action of $\Aut\left(\FF_n\right)$ on the abelianization of $\FF_{n}$ and the action of $\MCG\left(g\right)$ on
$H^1\left(\Sigma_g\right)$ give rise to epimorphisms:

\begin{align*}
\MCG(g)& \longrightarrow \Sp_{2g}\left(\ZZ\right) \\
\Aut(\FF_n)& \longrightarrow \GL_n\left(\ZZ\right) \\
\Out(\FF_n)& \longrightarrow \GL_n\left(\ZZ\right) \\
\end{align*}
The groups $\GL_n\left(\ZZ\right)$ and $\Sp_{2g}\left(\ZZ\right)$ are infinite, residually-finite
(hence sofic) and have Property $\T$. Hence, by Theorem \ref{thm:intro-stable-fi-subgroups}, they are not $\calP$-stable.
Moreover, the kernels of all three epimorphisms are the Torelli groups, which are known to be finitely-generated (as $g\geq 3$) \cites{johnson, magnus} (cf. \cite{BKM}).
Therefore, $\Aut(\FF_n)$, $\Out(\FF_n)$ and $\MCG\left(g\right)$ are not $\calP$-stable by virtue of Proposition \ref{prop:P-stability-and-quotients}.
\end{proof}

\section{\label{sec:remarks}Remarks and suggestions for further research}

\subsection{Stability of hyperbolic groups}

It is clear that free groups are both $\calP$-stable and $\HS$-stable.
On the other hand, lattices in the rank one simple Lie groups $\Sp\left(n,1\right)$
($n\geq2$) have Property $\T$ (see \cite{kos75} or \cite{bhv}),
and so they are neither $\calP$-stable nor $\HS$-stable by Theorem
\ref{thm:intro-sofic-hyperlinear}. However both free groups and the
cocompact lattices among the aforementioned lattices are hyperbolic
\cite{gro87}. So, hyperbolicity by itself does not suffice to determine
whether a group is stable. An interesting question is whether surface
groups of genus $g\geq2$ are $\calP$-stable or $\HS$-stable.

Proposition \ref{prop:P-stability-and-quotients} provides an additional method,
which does not have to use Property $\T$, to construct
hyperbolic groups which are not $\calP$-stable.
This can be done through the Rips construction \cite{rips82}: Let $\Lambda$ be a finitely-presented group which is not
$\calP$-stable. For example, take the Baumslag-Solitar group $\Lambda=\BS\left(2,3\right)$, which is not residually-finite \cite{BS}, but is free-by-solvable \cite{Kroph}, hence residually-solvable, and thus sofic.
Then, the Rips construction provides a hyperbolic group $\Omega$ which admits $\Lambda$ as a quotient with
a finitely-generated kernel. Proposition \ref{prop:P-stability-and-quotients} implies that $\Omega$
is not $\calP$-stable.
In Section \ref{subsec:flexi-stab}, we define a more flexible notion
of $\calP$-stability. We remark that if $\Lambda$ is sofic and not residually-finite, then it is not
$\calP$-stable even in the flexible sense. Hence, the same
is true for the hyperbolic group $\Omega$ (as the proof of Proposition \ref{prop:P-stability-and-quotients} works for flexible
$\calP$-stability as well).

\subsection{Property $\protect\ptau$ and Property $\protect\TFD$}

The arguments presented in Section \ref{sec:proofs} do not require
the full strength of Property $\T$ in the sense that they only go
through \emph{finite-dimensional} unitary representations of $\Gamma$.
Focusing on $\calP$-stability (rather than $\HS$-stability), even more
is true: only finite-dimensional unitary representations that \emph{factor
through finite quotients} of $\Gamma$ are relevant. Recall that
the finitely-generated group $\Gamma$ has Property $\ptau$ if it has a pair $\left(Q,\kappa\right)$,
$\left|Q\right|<\infty$, $\kappa>0$, such that Condition (\ref{eq:property-t})
from the definition of Property $\T$ holds for all finite-dimensional
representations of $\Gamma$ that factor through finite quotients,
and it has Property $\TFD$ (see \cite{lz}) if the same holds for
\emph{all} finite-dimensional representations of $\Gamma$. We get
the following more general result:
\begin{thm}
\textbf{\label{thm:main-tau}}\renewcommand{\labelenumi}{\roman{enumi})}
\begin{enumerate}
\item If $\Gamma$ has Property $\ptau$ and is $\calP$-stable, then $\Gamma$
has only finitely many finite-index subgroups. Hence, a sofic group
with Property $\ptau$ is not $\calP$-stable, unless it is finite.
\item If $\Gamma$ has Property $\TFD$ and is $\HS$-stable, then $\Gamma$
has only finitely many finite-index subgroups. Hence, a hyperlinear
group with Property $\TFD$ is not $\HS$-stable, unless it is finite.
\end{enumerate}
\end{thm}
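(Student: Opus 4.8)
The plan is to revisit the proof of Theorem~\ref{thm:intro-stable-fi-subgroups} and observe that Property~$\T$ is invoked at exactly one point, namely through Lemma~\ref{lem:under-t-almost-morphism-is-close-to-morphism}, which converts an almost-morphism $f$ into a genuine morphism $h$ inside the representation $\Hom_\CC\left(\calH_1,\calH_2\right)$. Everything else in the argument---Propositions~\ref{prop:restriction-is-almost-solution-with-almost-morphism} and~\ref{prop:inclusion-map-is-far-from-morphism}, together with Lemmas~\ref{lem:restriction-of-domain-and-range} and~\ref{lem:perturbation-of-unitaries}---is purely linear-algebraic and makes no reference to Property~$\T$. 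Thus it suffices, in each of the two cases, to identify $\calH_1$ and $\calH_2$ and to check that the $\Gamma$-action on $\Hom_\CC\left(\calH_1,\calH_2\right)$ lies in the class of representations for which Property~$\ptau$ (resp.\ Property~$\TFD$) already supplies a Kazhdan constant.

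For part~(i), I would run the $\calP$-stability half of the proof of Theorem~\ref{thm:intro-stable-fi-subgroups} unchanged. There the target $\calH_2=\CC\left[\Gamma/\Lambda_n\right]$ carries the permutation representation $\alpha_n$, which factors through the finite quotient of $\Gamma$ acting on the coset space, while the source $\calH_1=\CC\left[\calB_n^0\right]$ carries the solution $\tilde\rho_n$, whose values are permutation matrices and which therefore has finite image. Since the action on $\Hom_\CC\left(\calH_1,\calH_2\right)$ is $g\cdot f=\alpha_n\left(g\right)\circ f\circ\tilde\rho_n\left(g\right)^{-1}$, it factors through the finite quotient of $\Gamma$ by the intersection of the two finite-index kernels. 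Hence Property~$\ptau$ furnishes a single Kazhdan constant $\kappa>0$, uniform over $n$, applicable to every such $\Hom$-representation, and the contradiction with Proposition~\ref{prop:inclusion-map-is-far-from-morphism}(ii) goes through verbatim, showing that $\Gamma$ has only finitely many finite-index subgroups.

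For part~(ii), the $\HS$-stability half of the proof uses arbitrary finite-dimensional irreducible unitary representations $\calH$ and arbitrary unitary solutions $\tilde\rho_n\in\U\left(\calH_0\right)$, neither of which need factor through a finite quotient; this is precisely why Property~$\ptau$ no longer suffices and Property~$\TFD$---a Kazhdan constant valid for \emph{all} finite-dimensional representations---is the correct hypothesis. With this substitution the same argument yields that $\Gamma$ has only finitely many irreducible finite-dimensional representations up to isomorphism, and the final paragraph of that proof (extracting pairwise non-isomorphic irreducible constituents of $\CC\left[\Gamma/\Lambda_n\right]$ along a strictly descending chain of finite-index normal subgroups) then forces $\Gamma$ to have only finitely many finite-index subgroups. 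In both parts the ``hence'' statements follow as in Theorem~\ref{thm:intro-sofic-hyperlinear}: a sofic (resp.\ hyperlinear) group that is $\calP$-stable (resp.\ $\HS$-stable) is residually finite by \cite{glebsky-rivera}, and a residually-finite group with finitely many finite-index subgroups is finite.

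The step I expect to require the most care is the verification in part~(i) that the $\Hom$-representation genuinely factors through a finite quotient, together with the uniformity of the resulting Kazhdan constant. One must use that the solution $\tilde\rho_n$ may be taken to consist of permutation matrices, so that its image is finite, and that Property~$\ptau$ delivers a \emph{single} pair $\left(S^\pm,\kappa\right)$ working simultaneously across the entire infinite family of finite-quotient representations, so that the limit $n\to\infty$ may be taken with $\kappa$ fixed. Everything else is a transcription of the earlier argument.
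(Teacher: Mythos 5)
Your proposal is correct and is essentially the paper's own argument: the paper proves Theorem \ref{thm:main-tau} precisely by remarking that the proof of Theorem \ref{thm:intro-stable-fi-subgroups} invokes Property $\T$ only through Lemma \ref{lem:under-t-almost-morphism-is-close-to-morphism}, applied to $\Hom$-representations that are finite-dimensional (hence Property $\TFD$ suffices in the $\HS$ case) and that factor through finite quotients in the $\calP$ case, since both $\alpha_n$ and the permutation-matrix solution $\tilde\rho_n$ have finite image (hence Property $\ptau$ suffices). Your identification of the key verification --- that the $\Hom_\CC\left(\calH_1,\calH_2\right)$ action factors through a finite quotient, with a Kazhdan constant uniform in $n$ --- matches the paper exactly.
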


\textbf{Warning:} The weaker notion of Property $\ptau$ \emph{with
respect to a family of finite-index subgroups} $\left\{ N_{i}\right\} _{i=1}^{\infty}$
does \emph{not} suffice to deduce the conclusion of Theorem \ref{thm:main-tau}(i),
even if the family is separating (i.e. $\cap N_{i}=\left\{ 1\right\} $).
For example, the group $\Gamma=\langle\left(\begin{array}{cc}
1 & 2\\
0 & 1
\end{array}\right),\left(\begin{array}{cc}
1 & 0\\
2 & 1
\end{array}\right)\rangle$ is free, so it is clearly $\calP$-stable, and has Property $\ptau$
with respect to the family of \emph{congruence} subgroups $\left\{\Gamma\cap\Ker\left(\SL_{2}\left(\ZZ\right)\rightarrow\SL_{2}\left(\ZZ/m\ZZ\right)\right)\right\}_{m=1}^{\infty}$
(it has the so called \emph{Selberg property} \cite{LW}).

Note that it is easy to see that a free product of stable groups is
stable (for all versions of stability). An interesting corollary of
Theorem \ref{thm:main-tau}(i) is that a free product of two $\calP$-stable
groups, amalgamated along a finite-index subgroup, is not necessarily
$\calP$-stable. Indeed, for $p$ an odd prime, look at
\[
\Gamma\left(2\right)=\Ker\left(\SL_{2}\left(\ZZ\left[\frac{1}{p}\right]\right)\rightarrow\SL_{2}\left(\ZZ\left[\frac{1}{p}\right]/2\ZZ\left[\frac{1}{p}\right]\right)\right)\,\,\text{.}
\]
This is an amalgamated product of two free groups along a finite index
subgroup (see \cite{serre}, Chapter II, Section 1.4, Corollary 2), and,
as with the example of $\SL_2\left(\ZZ\right)$ above, it has the Selberg property \cite{LW}.
However, unlike $\SL_2\left(\ZZ\right)$, the group $\SL_{2}\left(\ZZ\left[\frac{1}{p}\right]\right)$
satisfies the congruence subgroup property \cite{serre70}, and so from the Selberg property
we deduce that it has Property $\ptau$, and so the same is true for
$\Gamma\left(2\right)$,
hence the latter is not $\calP$-stable.

\subsection{Relative Property $\protect\T$}

Recall that the group $\Gamma$, generated by the finite set $S$, has \emph{relative Property $\T$}
with respect to a subgroup $N\leq\Gamma$ if there is $\kappa>0$,
such that every unitary representation $\left(\calH,\rho\right)$
of $\Gamma$ that has an $\left(S,\kappa\right)$-invariant vector
$v\in\calH$, also has an $N$-invariant nonzero vector. If $\Gamma$
has relative Property $\T$ with respect to a subgroup $N\leq\Gamma$,
rather than Property $\T$, we may deduce a weak form of Lemma \ref{lem:under-t-almost-morphism-is-close-to-morphism},
where the constructed morphism $h$ is merely a morphism of $N$-representations.
Using this variant of the lemma, and arguing as in the proof of Theorem
\ref{thm:intro-stable-fi-subgroups}, we deduce the following:
\begin{thm}
\label{thm:relative-t}Assume that $\Gamma$ is $\calP$-stable and has
relative Property $\T$ with respect to a subgroup $N\leq\Gamma$. Then, the
collection 
\[
\left\{ L\leq\Gamma\mid\left[\Gamma:L\right]<\infty\,\,\text{and}\,\,\Gamma=NL\right\} 
\]
is finite.
\end{thm}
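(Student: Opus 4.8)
The plan is to re-run the proof of Theorem \ref{thm:intro-stable-fi-subgroups} in the $\calP$-stable case, tracking the single place where full Property $\T$ is used and replacing it by relative Property $\T$ with respect to $N$. That single place is Lemma \ref{lem:under-t-almost-morphism-is-close-to-morphism}; under relative Property $\T$ one obtains only the weaker conclusion recorded in the discussion preceding this theorem, namely that the nearby morphism $h$ is a morphism of $N$-representations rather than of $\Gamma$-representations. The crucial dictionary entry is that the condition $\Gamma=NL$ is equivalent to $N$ acting transitively on the coset space $\Gamma/L$: indeed $\Gamma=NL$ holds iff for every $g\in\Gamma$ there is $n\in N$ with $nL=gL$. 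This transitivity is exactly what is needed to make the relative analogue of Proposition \ref{prop:inclusion-map-is-far-from-morphism}(ii) go through.

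First I would argue by contradiction: suppose the collection is infinite. As $\Gamma$ is generated by the finite set $S$, it has only finitely many subgroups of each finite index, so the collection contains subgroups of arbitrarily large index; extract a sequence $\left\{\Lambda_{n}\right\}_{n=1}^{\infty}$ with $\left[\Gamma:\Lambda_{n}\right]\to\infty$ and $\Gamma=N\Lambda_{n}$ for all $n$. For each $n$ I form the permutation representation $\alpha_{n}$ of $\Gamma$ on $\CC\left[\calB_{n}\right]$, $\calB_{n}=\Gamma/\Lambda_{n}$, delete one basis vector to obtain $\calB_{n}^{0}$ and the inclusion $f_{n}:\CC\left[\calB_{n}^{0}\right]\rightarrow\CC\left[\calB_{n}\right]$, and apply Proposition \ref{prop:restriction-is-almost-solution-with-almost-morphism}(ii) to produce $\rho_{n}:S\rightarrow\U\left(\CC\left[\calB_{n}^{0}\right]\right)$ permuting $\calB_{n}^{0}$. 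Exactly as in the proof of Theorem \ref{thm:intro-stable-fi-subgroups}, $\left(\rho_{n}\right)$ is a $\calP$-stability-challenge; $\calP$-stability supplies a solution $\left(\tilde{\rho}_{n}\right)$ by permutation matrices, extended to homomorphisms $\tilde{\rho}_{n}:\Gamma\rightarrow\U\left(\CC\left[\calB_{n}^{0}\right]\right)$, for which
\[
\frac{1}{\|f_{n}\|_{\F}}\cdot\|\alpha_{n}\left(s^{-1}\right)\circ f_{n}\circ\tilde{\rho}_{n}\left(s\right)-f_{n}\|_{\F}\overset{n\rightarrow\infty}{\longrightarrow}0
\]
holds for each $s\in S$.

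At this point I would invoke the weak form of Lemma \ref{lem:under-t-almost-morphism-is-close-to-morphism} afforded by relative Property $\T$: there are maps $h_{n}:\CC\left[\calB_{n}^{0}\right]\rightarrow\CC\left[\calB_{n}\right]$, each a morphism of $N$-representations, with $\|f_{n}-h_{n}\|_{\F}/\|f_{n}\|_{\F}\to0$. To reach a contradiction I then need the relative version of Proposition \ref{prop:inclusion-map-is-far-from-morphism}(ii): if $N$ acts transitively on $\calB$ through $\alpha$ and each $\alpha\left(s\right),\beta\left(s\right)$ permutes the respective basis, then $\|f-h\|_{\F}\geq\frac{1}{\sqrt{2}}\cdot\|f\|_{\F}$ for every $N$-morphism $h:\calH_{0}\rightarrow\calH$. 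Its proof is the proof of Proposition \ref{prop:inclusion-map-is-far-from-morphism}(ii) verbatim, with $N$ in place of $\Gamma$ throughout: an $N$-morphism has $\left\{E_{b_{0},b}\right\}$-coefficients constant on the $N$-orbits of $\calB_{0}\times\calB$, the closest such $h$ is the orthogonal projection of $f$ onto the $N$-invariants, and the decisive orbit inequality $\left|\calO\right|\geq2\cdot\left|\calO_{i}^{\calB_{0}}\right|$ is re-derived from $\Stab_{N}\left(\left(b_{0},b\right)\right)=\Stab_{N}\left(b_{0}\right)\cap\Stab_{N}\left(b\right)$ together with $\left[N:\Stab_{N}\left(b_{0}\right)\right]\leq\left|\calB_{0}\right|<\left|\calB\right|=\left[N:\Stab_{N}\left(b\right)\right]$, which forces $\Stab_{N}\left(b_{0}\right)\not\leq\Stab_{N}\left(b\right)$ and hence a strict stabilizer inclusion. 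Applying this with $h=h_{n}$ contradicts $\|f_{n}-h_{n}\|_{\F}/\|f_{n}\|_{\F}\to0$, proving the collection is finite.

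The step requiring the most care is verifying that the orbit-counting argument of Proposition \ref{prop:inclusion-map-is-far-from-morphism}(ii) survives the passage from $\Gamma$-orbits to $N$-orbits. The point is that that argument used only two facts: transitivity of the acting group on $\calB$, and the strict inequality $\left|\calB_{0}\right|<\left|\calB\right|$. The second holds for any codimension-one subspace, while the first is supplied precisely by the hypothesis $\Gamma=N\Lambda_{n}$; so no genuinely new estimate is needed, only the bookkeeping of replacing $\Gamma$ by $N$ in the stabilizer computation. This is why the conclusion must be restricted to those finite-index $L$ with $\Gamma=NL$.
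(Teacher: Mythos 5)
Your proposal is correct and is essentially the paper's own argument: the paper proves this theorem exactly as you do, by combining the weak (relative Property $\T$) form of Lemma \ref{lem:under-t-almost-morphism-is-close-to-morphism} with the argument of Theorem \ref{thm:intro-stable-fi-subgroups}, and your two additions --- the equivalence of $\Gamma=NL$ with transitivity of $N$ on $\Gamma/L$, and the verbatim $N$-orbit rerun of Proposition \ref{prop:inclusion-map-is-far-from-morphism}(ii) --- are precisely the bookkeeping the paper leaves to the reader. The one point you gloss over, namely that the qualitative definition of relative Property $\T$ yields the quantitative statement that almost-invariant vectors are \emph{close} to $N$-invariant vectors (immediate when $N$ is normal, since then $\left(\calH^{N}\right)^{\perp}$ is $\Gamma$-invariant, and a theorem of Jolissaint in general), is glossed over identically in the paper.
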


We exhibit an application of Theorem \ref{thm:relative-t}. The group
$\SL_{2}\left(\ZZ\right)$ acts on $\ZZ^{2}$ by matrix multiplication,
giving rise to a semi-direct product $\ZZ^{2}\rtimes\SL_{2}\left(\ZZ\right)$.
It is well-known that this semi-direct product has relative Property
$\T$ with respect to the subgroup $\ZZ^{2}\rtimes\left\{ 1\right\} $.
So, the infinite collection $\left\{ \left(n\ZZ^{2}\right)\rtimes\SL_{2}\left(\ZZ\right)\right\} _{n=1}^{\infty}$,
of finite-index subgroups, exhibits the non-$\calP$-stability of $\ZZ^{2}\rtimes\SL_{2}\left(\ZZ\right)$.
More interestingly, letting $H$ be the finite-index
subgroup of $\SL_{2}\left(\ZZ\right)$ generated by $\left(\begin{array}{cc}
1 & 2\\
0 & 1
\end{array}\right)$ and $\left(\begin{array}{cc}
1 & 0\\
2 & 1
\end{array}\right)$, we may deduce in the same manner that $\ZZ^{2}\rtimes H$ is not
$\calP$-stable as well. Note that since $\ZZ^{2}$ is abelian and $H$
is free, we know that both are stable \cite{arzhantseva-paunescu}! We conclude:
\begin{cor}
A semidirect product of finitely-presented
$\calP$-stable groups is not necessarily $\calP$-stable.
\end{cor}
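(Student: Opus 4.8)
The plan is to exhibit the explicit counterexample $\Gamma' = \ZZ^2 \rtimes H$ already singled out in the discussion above, where $H$ is the free, finite-index subgroup of $\SL_2(\ZZ)$ introduced above, acting on $\ZZ^2$ by matrix multiplication. First I would record that $\Gamma'$ genuinely is a semidirect product of finitely-presented $\calP$-stable groups: the factor $\ZZ^2$ is finitely generated abelian, hence finitely presented, and the factor $H$ is free of finite rank, hence also finitely presented; both are $\calP$-stable by \cite{arzhantseva-paunescu}. Thus the entire content of the corollary reduces to showing that $\Gamma'$ itself fails to be $\calP$-stable.

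To establish non-stability I would apply Theorem \ref{thm:relative-t} with the normal subgroup $N = \ZZ^2 \rtimes \{1\}$. The crucial input is that the pair $(\Gamma', N)$ has relative Property $\T$. This I would derive from the classical fact that $\ZZ^2 \rtimes \SL_2(\ZZ)$ has relative Property $\T$ with respect to $\ZZ^2$, combined with the inheritance principle that relative Property $\T$ with respect to a normal subgroup $N$ passes to any finite-index subgroup still containing $N$. Since $\Gamma' = \ZZ^2 \rtimes H$ sits inside $\ZZ^2 \rtimes \SL_2(\ZZ)$ with index $[\SL_2(\ZZ):H] < \infty$ and contains $N = \ZZ^2$, the pair $(\Gamma', N)$ inherits relative Property $\T$.

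Granting this, suppose toward a contradiction that $\Gamma'$ were $\calP$-stable. Theorem \ref{thm:relative-t} would then force the collection $\{L \leq \Gamma' : [\Gamma':L] < \infty \text{ and } \Gamma' = NL\}$ to be finite. I would contradict this by producing an infinite family inside it: for each $n \geq 1$ put $L_n = (n\ZZ^2) \rtimes H$. Each matrix of $H$ preserves the sublattice $n\ZZ^2$, so $L_n$ is a subgroup of $\Gamma'$ of index $[\ZZ^2 : n\ZZ^2] = n^2$, and one has $N L_n = \ZZ^2 \cdot ((n\ZZ^2)\rtimes H) = \Gamma'$ because $\ZZ^2 = \ZZ^2 (n\ZZ^2)$ and $H \leq L_n$. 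The subgroups $L_n$ are pairwise distinct, so the collection is infinite, a contradiction. Hence $\Gamma'$ is not $\calP$-stable, which proves the corollary.

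The only delicate point in this plan is the inheritance of relative Property $\T$ by the finite-index subgroup $\Gamma'$; everything else is elementary bookkeeping with lattice indices. Concretely, one must verify that restricting a unitary representation of $\ZZ^2 \rtimes \SL_2(\ZZ)$ to $\Gamma'$ and running a Kazhdan-type argument still yields an $N$-invariant vector, which is where the finiteness of $[\SL_2(\ZZ):H]$ is actually used. Alternatively, one can bypass the inheritance step and prove relative Property $\T$ for $(\Gamma', N)$ directly by the Burger-type argument, using that $H$, being finite-index, remains Zariski-dense in $\SL_2$ and hence acts on $\widehat{\ZZ^2}$ with the requisite spectral gap away from the trivial character.
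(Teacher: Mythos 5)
Your proposal is correct and follows essentially the same route as the paper: the paper's counterexample is exactly $\Gamma'=\ZZ^2\rtimes H$ with $H$ the free finite-index subgroup of $\SL_2(\ZZ)$ generated by the two elementary matrices, shown to be non-$\calP$-stable by applying Theorem \ref{thm:relative-t} to $N=\ZZ^2\rtimes\{1\}$ together with the infinite family $\left\{(n\ZZ^2)\rtimes H\right\}_{n\geq 1}$ of finite-index subgroups $L$ satisfying $NL=\Gamma'$. The only difference is that you make explicit the relative Property $\T$ input for the pair $\left(\ZZ^2\rtimes H,\ZZ^2\right)$ (via finite-index inheritance or a Burger-type spectral argument), a step the paper leaves implicit in the phrase ``in the same manner''; your justification of it is sound.
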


\subsection{\label{subsec:flexi-stab}A flexible variant of $\protect\calP$-stability}

Finally, let us make a remark and a suggestion for further research.
Our proof of non-$\calP$-stability of groups with Property $\T$
starts with a true action
of $\Gamma$ on a set $X$ of $n$ points, which is then deformed a bit
into an almost action on a set of $n-1$ points. For $\Gamma$ to be $\calP$-stable,
this almost action must be close to an actual action on $n-1$ points.
We proved that it is never the case if $\Gamma$ has Property $\T$ and the action
$\Gamma\curvearrowright X$ is transitive. However, the action on $n-1$ points is clearly close to a
true action on a set of $n$ points since we started with such an
action.

One may suggest a notion of ``flexible $\calP$-stability'', which
requires that every almost action can be corrected to an action
by allowing to add points to the set before correcting
it.
One then needs to decide how to measure the distance between permutations of different sizes.
For $\sigma\in\Sym\left(n\right)$ and $\tau\in\Sym\left(m\right)$, $n\leq m$,
we generalize $d_n^{\Hamming}$ by defining
\[
d_n^{\flex}\left(\sigma,\tau\right)=\frac{1}{n}\cdot
\left(\left|\left\{ x\in\left[n\right]\mid\sigma\left(x\right)\neq\tau\left(x\right)\right\} \right|+\left(m-n\right)\right)\,\,\text{.}
\]
For a $\calP$-stability-challenge
$\left(f_k:S\rightarrow\Sym\left(n_k\right)\right)_{k=1}^\infty$,
we define a "flexible solution" to be a sequence of homomorphisms
$\left(g_k:\Gamma\rightarrow\Sym\left(t_k\right)\right)_{k=1}^\infty$, $t_k\geq n_k$,
for which $d_{n_k}^{\flex}\left(f_k\left(s\right),g_k\left(s\right)\right)\overset{k\rightarrow\infty}{\longrightarrow}0$
for each $s\in S$.
We say that $\Gamma$ is $\calP$-\emph{flexibly}-stable if every
$\calP$-stability-challenge for $\Gamma$ has a flexible solution.

One may consider an even more flexible notion of $\calP$-stability
by replacing $d_n^{\flex}$ with
\[
d_n^{\veryflex}\left(\sigma,\tau\right)=\frac{1}{n}\cdot
\left|\left\{ x\in\left[n\right]\mid\sigma\left(x\right)\neq\tau\left(x\right)\right\} \right|
\,\,\text{.}
\]

The flexible notions of stability suggest a path towards finding
a non-sofic group: a non-residually-finite group which
is $\calP$-flexibly-stable,
or even just $\calP$-very-flexibly-stable,
is non-sofic.
In fact, there is a possibly more accessible path which requires less.
A \emph{sofic approximation} for $\Gamma$ is a sequence
$\left(f_k:S\rightarrow\Sym\left(n_k\right)\right)_{k=1}^\infty$,
$n_k\in\NN$, such that
\begin{align*}
\forall w\in\Ker\left(\pi \right) \qquad & d_{n_{k}}\left(f_{k}\left(w\right),{\bf 1}_{G_{n_{k}}}\right)\overset{k\rightarrow\infty}{\longrightarrow}0 \\
\forall w\notin\Ker\left(\pi \right) \qquad & d_{n_{k}}\left(f_{k}\left(w\right),{\bf 1}_{G_{n_{k}}}\right)\overset{k\rightarrow\infty}{\longrightarrow}1\,\,\text{.}
\end{align*}
Note that every sofic approximation is a $\calP$-stability-challenge.
Now, if a finitely-generated group $\Gamma$ is not residually-finite, then $\Gamma$ must be non-sofic if it satisfies the following condition:
For every sofic approximation 
$\left(f_k:S\rightarrow\Sym\left(n_k\right)\right)_{k=1}^\infty$
for $\Gamma$, there is a sequence of homomorphisms
$\left(g_k:\Gamma\rightarrow\Sym\left(t_k\right)\right)_{k=1}^\infty$,
$t_k\geq n_k$, such that
\[
\liminf_{k\rightarrow\infty}
\sum_{s\in S}d
^{\veryflex}_{n_{k}}\left(f_{k}\left(s\right),g_{k}\left(s\right)\right)
=0\,\,\text{.}
\]
In fact, it is enough to require that the above $\liminf$ is smaller than some small enough
constant depending on $\left|S\right|$ and on the length of the shortest word exhibiting the non-residual-finiteness
of $\Gamma$.
This strategy generalizes the observation \cite{glebsky-rivera} that
a non-residually-finite $\calP$-stable group is non-sofic,
and its strengthening \cite{arzhantseva-paunescu} which says
the same with respect to the notion of weak-stability,
i.e., stability with respect to sofic approximations only, rather
than general $\calP$-stability-challenges.

One may hope that this path would make it easier to find a non-sofic group.
Note that it is still an open problem whether surface groups are
$\calP$-stable, but in response to an earlier version of the present paper,
Lazarovich, Levit and Minsky proved that
they are $\calP$-flexibly-stable \cite{lazarovich-levit-minsky}.

\appendix
\section{Non-stability through almost-irreducible actions}
In this appendix, we give an alternative proof of Theorem
\ref{thm:intro-sofic-hyperlinear} under the assumption that $\Gamma$
has an infinite linear quotient.
In fact, we prove a stronger statement:
\begin{thm}\label{thm:appendix-linear-group-not-stable}
	Let $\Gamma$ be a finitely generated group with Property $\T$. Assume that $\Gamma$ has an infinite linear
	quotient. Then $\Gamma$ has a $\calP$-stability-challenge which has no solution, even
	when thought of as an $\HS$-stability-challenge.
	In particular, $\Gamma$ is neither $\calP$-stable nor $\HS$-stable.
\end{thm}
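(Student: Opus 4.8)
The plan is to make the non-stability completely explicit, producing a single sequence of genuine permutation actions which is a $\calP$-stability-challenge and which resists correction in both metrics simultaneously. The guiding idea is captured by the appendix title: I want \emph{almost-irreducible actions}, i.e. permutation challenges whose associated unitary almost-representations sit close to honest irreducible representations, so that the rigidity of Proposition \ref{prop:inclusion-map-is-far-from-morphism}(i) (Schur's Lemma) can be invoked against \emph{all} solutions at once, not just permutation-valued ones.

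First I would convert the hypothesis into a concrete supply of finite targets. Since $\Gamma$ has Property $\T$ and an infinite linear quotient $L\leq\GL_d(k)$, the group $L$ inherits Property $\T$ and hence is not virtually solvable (an infinite amenable quotient would contradict $\T$). Applying strong approximation to the semisimple part of the Zariski closure of $L$, I obtain epimorphisms $\Gamma\twoheadrightarrow X_n$ onto finite simple groups of Lie type $X_n=\mathbf{G}(\mathbb{F}_{q_n})$ of a fixed Lie type, with $\lvert X_n\rvert\to\infty$. Because each $X_n$ is a quotient of $\Gamma$, the images of $S$ generate $X_n$ with Kazhdan constant at least the constant $\kappa$ of $(\Gamma,S^\pm)$; this \emph{uniform} spectral gap is what replaces the appeal to ``infinitely many finite-index subgroups'' used in the body. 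The representation theory of Chevalley groups now enters: the minimal dimension of a nontrivial irreducible representation of $X_n$ grows with $q_n$, and each $X_n$ carries irreducible representations $V_n$ of dimension $N_n\to\infty$ realized inside explicit permutation modules, e.g. the nontrivial constituent of the action on a flag variety over $\mathbb{F}_{q_n}$. I pull the $V_n$ back to $\Gamma$.

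Second, and this is the technical heart, I would construct the challenge as a sequence of honest permutations $\sigma_n\colon S\to\Sym(N_n-1)$ whose associated unitary almost-representation lies within normalized $\HS$-distance $o(1)$ of the codimension-one compression $\alpha_0^{(n)}=P_0\circ V_n(\cdot)\mid_{\calH_0}$ of the irreducible $V_n$, where $\calH_0\subset V_n$ has codimension one. Concretely I would start from the genuine permutation action underlying $V_n$, pass to its sum-zero (irreducible) part, and approximate the compression of that part by genuine permutations using the explicit $\mathbb{F}_{q_n}$-model of $V_n$; that $\sigma_n$ is a $\calP$-stability-challenge, i.e. $\sigma_n(w)$ is $o(1)$-close to the identity in $d_n^{\Hamming}$ for each fixed $w\in\Ker(\pi)$, then follows from Lemma \ref{lem:perturbation-of-unitaries} exactly as in Proposition \ref{prop:restriction-is-almost-solution-with-almost-morphism}. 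With this in hand, the non-existence of a solution runs parallel to the proof of Theorem \ref{thm:intro-stable-fi-subgroups}, but using part (i) rather than part (ii) of Proposition \ref{prop:inclusion-map-is-far-from-morphism}: an $\HS$-solution would, via Lemma \ref{lem:under-t-almost-morphism-is-close-to-morphism} and the uniform gap $\kappa$, force the inclusion $f_n\colon\calH_0\hookrightarrow V_n$ to be close (after normalizing by $\|f_n\|_{\F}$) to a morphism of $\Gamma$-representations $h_n\colon\calH_0\to V_n$; but $V_n$ is irreducible and $\dim\calH_0<\dim V_n$, so Proposition \ref{prop:inclusion-map-is-far-from-morphism}(i) gives $h_n=0$ and $\|f_n-h_n\|_{\F}=\|f_n\|_{\F}$, a contradiction. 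Since a $\calP$-solution is in particular an $\HS$-solution, both are excluded at once.

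The step I expect to be the main obstacle is the second one. The naive candidate — the codimension-one coordinate restriction of an honest transitive permutation module, as used in the body — does \emph{not} suffice here, because such a challenge always admits an $\HS$-solution, namely the augmentation representation itself: the coordinate hyperplane and the sum-zero hyperplane differ by a bounded-rank operator and are therefore $\HS$-indistinguishable, so the irreducible augmentation can be conjugated onto the challenge space with vanishing normalized error. The entire difficulty is thus to realize a permutation almost-action close to the irreducible $V_n$ \emph{without} an accompanying trivial summand, so that this ``augmentation escape'' is unavailable; this is exactly where one must exploit the fine structure — explicit models and character/matrix-coefficient estimates — of the representations of the Chevalley groups $X_n$, and where the uniform Kazhdan constant is needed to upgrade $\HS$-closeness of a putative solution to the morphism statement that Schur's Lemma then kills.
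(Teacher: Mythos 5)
Your proposal follows the same skeleton as the paper's appendix: strong approximation and Chebotarev produce surjections onto Chevalley groups $G(\FF_{q})$ (a priori only from a finite-index normal subgroup $\Gamma^{0}$ of $\Gamma$, so the action must then be extended to $\Gamma$ --- a point you elide), Theorem \ref{thm:appendix-good-representations} via Proposition \ref{prop:appendix-chevalley} supplies an irreducible constituent $\chi_{n}\subset L^{2}(G/B)$ of almost full dimension (the Steinberg representation --- not the ``sum-zero part'', which is in general reducible), and Property $\T$ together with Schur's Lemma excludes solutions. Your one structural deviation is to place the challenge on $\dim\chi_{n}-1$ points, whereas the paper restricts the action on $X_{n}$ to a subset $Y_{n}$ with $|Y_{n}|=\dim\chi_{n}$ exactly; and here your instinct is correct, indeed necessary. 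Writing $W_{n}$ for the orthogonal complement of $\chi_{n}$ in $L^{2}(X_{n})$ and $f=P_{\chi_{n}}\circ\iota:L^{2}(Y_{n})\rightarrow\chi_{n}$, one checks $\|f^{*}f-I\|_{\F}\leq\sqrt{\dim W_{n}}$ and $\|\chi_{n}(s)\circ f-f\circ\bar{\varphi}_{n}(s)\|_{\F}\leq\sqrt{2\dim W_{n}}$, so the unitary polar factor $U$ of $f$ gives a genuine representation $U^{*}\chi_{n}(\cdot)U$ of $\Gamma$ on $L^{2}(Y_{n})$ whose normalized Hilbert-Schmidt distance to the challenge is $O\bigl(\sqrt{\dim W_{n}/\dim\chi_{n}}\bigr)\rightarrow0$: exactly your ``augmentation escape'', scaled up. Thus at the paper's dimension the challenge \emph{does} admit an $\HS$-solution, and the paper's closing step (``every irreducible subrepresentation of the solution has dimension at most $|Y_{n}|-1$'') is valid only for permutation-valued solutions, where the trivial constituent forces reducibility; as written, that argument yields non-$\calP$-stability but not the stronger assertion about $\HS$-solutions. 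Your strict inequality $\dim H_{0}<\dim V_{n}$ is precisely the repair, since then Proposition \ref{prop:inclusion-map-is-far-from-morphism}(i) kills \emph{every} nonzero morphism, with no reducibility hypothesis on the solution.

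The genuine gap in your proposal is therefore not where you think it is: you leave the construction of the permutation challenge open, calling it the main obstacle and predicting that it needs character or matrix-coefficient estimates, when in fact it is the easy step and is carried out by the naive restriction you rejected. Take $Y'_{n}\subset X_{n}$ with $|Y'_{n}|=\dim\chi_{n}-1$ and let $\sigma_{n}(s)$ agree with the $X_{n}$-action whenever a point of $Y'_{n}$ and its image both lie in $Y'_{n}$, completed to a permutation of $Y'_{n}$ arbitrarily, as in Proposition \ref{prop:restriction-is-almost-solution-with-almost-morphism}. Since $|X_{n}\setminus Y'_{n}|=\dim W_{n}+1=o(|X_{n}|)$, the argument of that proposition shows $(\sigma_{n})$ is a $\calP$-stability-challenge; moreover $f=P_{\chi_{n}}\circ\iota:L^{2}(Y'_{n})\rightarrow\chi_{n}$ satisfies $\|f\|_{\F}^{2}\geq|Y'_{n}|-\dim W_{n}$ and $\|\chi_{n}(s)\circ f-f\circ\sigma_{n}(s)\|_{\F}\leq\sqrt{2(\dim W_{n}+1)}$. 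Hence, for any putative unitary solution $\tilde{\sigma}_{n}$, the map $f$ is an almost-invariant vector of $\Hom_{\CC}\left(\left(L^{2}(Y'_{n}),\tilde{\sigma}_{n}\right),\chi_{n}\right)$ with relative defect $o(1)$, and Lemma \ref{lem:under-t-almost-morphism-is-close-to-morphism} would produce a nonzero morphism into the irreducible $\chi_{n}$ from a space of strictly smaller dimension, contradicting Proposition \ref{prop:inclusion-map-is-far-from-morphism}(i). The escape you fear would require a genuine $\Gamma$-representation of dimension at most $\dim\chi_{n}-1$ lying $\HS$-close to $\sigma_{n}$, and ruling this out is exactly what this argument does; no fine structure of the representations of $G(\FF_{q})$ beyond Proposition \ref{prop:appendix-chevalley} is needed. (Also, the uniform Kazhdan constant of the finite quotients plays no role: the Kazhdan pair one applies is that of $\Gamma$ itself, since $\tilde{\sigma}_{n}$ is merely a representation of $\Gamma$, not of $G(\FF_{q})$.)
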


The main ingredient in the proof of Theorem \ref{thm:appendix-linear-group-not-stable} is the following result, proved below:
\begin{thm}\label{thm:appendix-good-representations}
	Let $\Gamma$ be a finitely generated group having
	a non-virtually-solvable infinite linear quotient.
	Then it has an infinite sequence
	of finite permutational representations $\rho_{n}:\Gamma\rightarrow\Sym\left(X_{n}\right)$
	such that the corresponding unitary representations $\bar{\rho}_{n}:\Gamma\rightarrow\U\left(L^{2}\left(X_{n}\right)\right)$
	are almost irreducible in the following sense: $\bar{\rho}_{n}$ has
	an irreducible subrepresentation $\chi_{n}$ such that $\frac{\dim\chi_{n}}{\dim\bar{\rho}_{n}}\overset{n\rightarrow\infty}{\longrightarrow}1$.
\end{thm}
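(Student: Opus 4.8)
The plan is to realize the desired permutation representations as pullbacks of the natural actions of finite groups of Lie type on their flag varieties, using the \emph{Steinberg representation} to supply the almost-full-dimensional irreducible constituent. Write $\Gamma\twoheadrightarrow\Lambda$ for the given non-virtually-solvable infinite linear quotient. The first goal is to manufacture, via strong approximation, a \emph{fixed} connected, simply connected, absolutely almost simple algebraic group $\mathbf G$ over a global field $F$, together with surjections $\Lambda\twoheadrightarrow G_n:=\mathbf G(\FF_{q_n})$ for an infinite sequence of prime powers $q_n\to\infty$. Composing $\Gamma\twoheadrightarrow\Lambda\twoheadrightarrow G_n$ with the action of $G_n$ on the finite set $X_n:=G_n/B_n$ of complete flags (where $B_n\le G_n$ is a Borel subgroup) produces the sought permutational representations $\rho_n\colon\Gamma\to\Sym\left(X_n\right)$.

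For the strong approximation step I would first reduce to a clean arithmetic situation. Since $\Lambda$ is finitely generated it lies in $\GL_d(R)$ for a finitely generated integral domain $R$; after specialization and passage to a suitable quotient I may assume $\Lambda$ is Zariski-dense in a connected simple algebraic group, and---after moving to the simply connected cover of the relevant simple factor of the derived group of the Zariski closure, using non-virtual-solvability to guarantee that this semisimple part is nontrivial---that $\Lambda$ is (up to an isogeny) a Zariski-dense finitely generated subgroup of $\mathbf G(F)$ with $\mathbf G$ simply connected absolutely almost simple over a global field $F$. Strong approximation in the form of Matthews--Vaserstein--Weisfeiler (in characteristic zero) and Weisfeiler/Pink (in positive characteristic) then gives that the closure of $\Lambda$ in $\prod_v\mathbf G(\mathcal O_v)$ is open, so the reductions $\Lambda\to\mathbf G(\FF_v)$ are surjective for all but finitely many places $v$; taking $q_n=|\FF_{v_n}|\to\infty$ yields the required surjections. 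This reduction---in particular the passage to a simply connected simple group of \emph{fixed} type to which strong approximation applies cleanly---is the main technical obstacle. The residual isogeny and center issues are harmless, since the estimate below is insensitive to replacing $\mathbf G(\FF_q)$ by any group squeezed between $\mathbf G(\FF_q)^{+}$ and its automorphic overgroups.

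The representation-theoretic step is then immediate from the theory of Chevalley groups. The unitary representation $\bar{\rho}_n$ on $L^2\left(X_n\right)=\Ind_{B_n}^{G_n}\mathbf 1$ contains the Steinberg representation $\St_n$ as an irreducible constituent of multiplicity one, with $\dim\St_n=q_n^{N}$, where $N$ is the number of positive roots of $\mathbf G$. On the other hand $\dim\bar{\rho}_n=\left|X_n\right|=\left[G_n:B_n\right]=\sum_{w\in W}q_n^{\ell(w)}$ is the Poincaré polynomial of the Weyl group $W$ evaluated at $q_n$, whose leading term is exactly $q_n^{N}$ (attained uniquely by the longest element). Hence
\[
\frac{\dim\St_n}{\dim\bar{\rho}_n}=\frac{q_n^{N}}{\sum_{w\in W}q_n^{\ell(w)}}\xrightarrow{n\to\infty}1 .
\]
Setting $\chi_n=\St_n$ finishes the argument: because $\Gamma\to G_n$ is surjective, a subspace of $L^2\left(X_n\right)$ is $\Gamma$-invariant if and only if it is $G_n$-invariant, so $\chi_n$ is irreducible as a $\Gamma$-representation, and $q_n\to\infty$ together with the fixed type of $\mathbf G$ gives $\dim\chi_n/\dim\bar{\rho}_n\to1$, as required. (In the rank-one case this specializes to the classical $2$-transitive action on the projective line, a useful sanity check.)
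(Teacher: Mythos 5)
There is a genuine gap at the heart of your reduction, namely the claim that ``after specialization and passage to a suitable quotient'' you may assume $\Lambda$ is Zariski-dense in a \emph{connected}, simply connected, absolutely almost simple group, so that strong approximation yields surjections from $\Lambda$ itself (hence from $\Gamma$) onto $\mathbf{G}(\FF_{q_n})$. The identity component $H^0$ of the Zariski closure $H$ of $\Lambda$ only sees the finite-index subgroup $\Lambda\cap H^0$, which is \emph{not} a quotient of $\Lambda$, and in general no quotient of $\Lambda$ has connected Zariski closure. Concretely, take $\Gamma=\Lambda=\SL_3(\ZZ)\rtimes\langle\theta\rangle$ with $\theta$ the transpose-inverse automorphism. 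By the Margulis normal subgroup property, and since one checks that $\Lambda$ has no nontrivial finite normal subgroup, the only infinite quotient of $\Lambda$ is $\Lambda$ itself; and in any linear realization of $\Lambda$ the image of $\theta$ cannot lie in the identity component of the Zariski closure, for otherwise conjugation by it would be an inner algebraic automorphism agreeing on a Zariski-dense subgroup with (the descent of) the outer diagram automorphism of type $A_2$, a contradiction. So for this $\Gamma$ your pipeline produces no surjection $\Gamma\twoheadrightarrow\mathbf{G}(\FF_q)$ at all, and the final step (``since $\Gamma\to G_n$ is surjective, $\Gamma$-invariance equals $G_n$-invariance'') has nothing to apply to. Your parenthetical about ``automorphic overgroups'' points in the right direction but is never carried out, and your passage to a single simple factor makes matters worse, since $\Gamma$ may permute the simple factors of $H^0$, so the kernel of the projection is not even $\Gamma$-invariant.

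This is exactly the point where the paper does its real work: it applies Nori--Weisfeiler strong approximation to the finite-index \emph{normal} subgroup $\Gamma^0=\Gamma\cap H^0$, keeping the whole semisimple group $H^0=\prod_i G_i$ (and correspondingly using a tensor product of Steinberg representations in Proposition \ref{prop:appendix-chevalley}, precisely because the factors may be permuted); it then identifies $G/B$ with the set $\conj_G(B)$ of conjugates of $B$, using that Borel subgroups are self-normalizing, and exploits normality of $\Gamma^0$ in $\Gamma$ to extend the $\Gamma^0$-action on $\conj_G(B)$ to a $\Gamma$-action on the \emph{same} finite set; finally it observes that the almost-full-dimensional irreducible subrepresentation survives this extension (the Steinberg subspace is canonical, hence invariant under the larger group, and a $\Gamma$-invariant subspace that is $\Gamma^0$-irreducible is a fortiori $\Gamma$-irreducible). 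A second, smaller issue: your dimension count $\dim\St_n=q_n^N$ and $[G_n:B_n]=\sum_{w\in W}q_n^{\ell(w)}$ presumes the \emph{split} form, whereas strong approximation only gives surjections onto the reductions $\mathbf{G}(\FF_v)$, which for a general $\mathbf{G}$ are quasi-split but need not be split; the paper inserts a Chebotarev-density argument to pass to the split (Chevalley) form, and you would need either that or the twisted Steinberg theory to close this step.
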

We begin by explaining how Theorem \ref{thm:appendix-linear-group-not-stable} follows from Theorem \ref{thm:appendix-good-representations}.
Let $\Gamma$ be a finitely generated group that has Property $\T$ and an infinite linear quotient.
This linear quotient is not virtually solvable by virtue of Property $\T$, hence Theorem \ref{thm:appendix-good-representations} is applicable to $\Gamma$.
As usual, we think of $\Gamma$ as a quotient of a free group $\FF$, generated by a finite set $S$.
Take $\rho_{n}:\Gamma\rightarrow\Sym\left(X_{n}\right)$ and $\chi_n$ as in Theorem \ref{thm:appendix-good-representations}.
Choose a subset $Y_n$ of $X_n$ whose cardinality is $\left|Y_n\right|=\dim\chi_n$.
Choose one action $\FF\curvearrowright Y_n$ satisfying the following:
for every $s\in S$ and $x\in X_n$, if $s$ takes $x$ to $x'$ under the action
$\FF\curvearrowright X_n$ and both $x$ and $x'$ belong to $Y_n$,
then $s$ takes $x$ to $x'$ under the action $\FF\curvearrowright Y_n$ as well.
Let $\bar{\varphi}_n:S\rightarrow\U\left(L^2\left(Y_n\right)\right)$ be the unitary representation
corresponding to the action $\FF\curvearrowright Y_n$.
The sequence $\left(\bar{\varphi}_n\right)_{n=1}^{\infty}$ is an $\HS$-stability-challenge for $\Gamma$, and can also be thought of as a $\calP$-stability-challenge.
Assume, for the sake of contradition, that this $\HS$-stability-challenge has
a solution.
Then, for large enough $n$, the map $\bar{\varphi}_n$ is very close to a map
$\tilde{\varphi}_n:S\rightarrow\U\left(L^2\left(Y_n\right)\right)$ which extends to a homomorphism
$\tilde{\varphi}_n:\Gamma\rightarrow\U\left(L^2\left(Y_n\right)\right)$.
Composing the canonical inclusion $L^2\left(Y_n\right)\rightarrow L^2\left(X_n\right)$ with a projection of $\Gamma$-representations
$L^2\left(X_n\right)\rightarrow\chi_n$ produces a linear map
$f:L^2\left(Y_n\right)\rightarrow \chi_n$.
The map $f$ is an almost invariant non-zero vector in the $\Gamma$-representation
$\Hom_{\CC}\left(\left(L^2\left(Y_n\right),\tilde{\varphi}_n\right),\chi_n\right)$.
Hence, since $\Gamma$ has Property $\T$, there is a non-zero morphism of
$\Gamma$-representations
$\left(L^2\left(Y_n\right),\tilde{\varphi_n}\right)\rightarrow\chi_n$.
This contradicts Schur's Lemma since $\left|Y_n\right|=\dim\chi_n$ and so every irreducible subrepresentation
of $\left(L^2\left(Y_n\right),\tilde{\varphi_n}\right)$ is of dimension smaller or equal to $\left|Y_n\right|-1=\dim\chi_n-1$.

\vspace{2mm}
Our proof of Theorem \ref{thm:appendix-good-representations} makes use of the following claim. Thanks are due to Bob Guralnick for useful discussions of it.
\begin{prop}\label{prop:appendix-chevalley}
Let $\utilde{G}$ be a semisimple Chevalley group scheme and $\utilde{B}$ its Borel subgroup.
Write $G=\utilde{G}\left(\FF_{q}\right)$ and $B=\utilde{B}\left(\FF_{q}\right)$, where $\FF_q$ is a
finite field of order $q$. Then, there is an irreducible $G$-subrepresentation
$V$ of $L^{2}\left(G/B\right)$
such that $\lim_{q\rightarrow\infty}\frac{\dim V}{\dim L^{2}\left(G/B\right)}=1$.
\end{prop}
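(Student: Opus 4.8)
The plan is to use the Bruhat decomposition of $G=\utilde{G}(\FF_q)$ to understand $L^2(G/B)$ as a $G$-representation and identify a single ``large'' irreducible constituent. Recall that $G/B$ is in bijection with the Borel subgroups of $G$, and $|G/B| = \sum_{w\in W}q^{\ell(w)}$, where $W$ is the Weyl group and $\ell$ the length function; this is a polynomial in $q$ whose leading term is $q^{N}$ with $N=\ell(w_0)$ the number of positive roots (the dimension of the unipotent radical of $B$). The permutation representation $L^2(G/B) = \Ind_B^G \mathbf{1}$ is exactly the space underlying the \emph{Hecke algebra} description: by Iwahori's theorem its endomorphism algebra $\End_{\CC G}(L^2(G/B))$ is the Iwahori--Hecke algebra $\mathcal{H}_q(W)$, which has dimension $|W|$ over $\CC$. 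Thus $L^2(G/B)$ decomposes into at most $|W|$ irreducible constituents (with multiplicity), and crucially $|W|$ is a \emph{constant} independent of $q$.

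The key step is then a counting/dimension argument. Since $\dim L^2(G/B)$ is a polynomial in $q$ with leading term $q^N$, while the \emph{number} of irreducible summands is bounded by the $q$-independent constant $|W|$, some irreducible constituent $V$ must have dimension of order $q^N$. More precisely, I would invoke the theory of principal series representations of finite groups of Lie type: the constituents of $\Ind_B^G\mathbf{1}$ are indexed (via the Hecke algebra $\mathcal{H}_q(W)$, whose irreducibles deform those of $\CC[W]$ as $q\to\infty$) by the irreducible characters of $W$, and for each such constituent the dimension is a polynomial in $q$ whose degree is governed by the corresponding \emph{generic degree} $d_\chi(q)$. The Steinberg representation (corresponding to the sign character of $W$) has dimension exactly $q^N$, which is the top-degree term, but for the \emph{lower bound on the fraction} one wants the constituent capturing the bulk of the dimension. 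I would argue that $\dim L^2(G/B)=\sum_\chi m_\chi\, d_\chi(q)$ where the sum is finite and $q$-independent in its index set, so the ratio $\dim V/\dim L^2(G/B)$ for the constituent $V$ of maximal degree tends to the ratio of leading coefficients; one must check this limit is $1$.

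The main obstacle I anticipate is precisely showing the winning fraction tends to $1$ rather than to some constant strictly less than $1$: a priori several constituents could share the top degree $q^N$, making the limiting ratio a proper fraction. To handle this I would use the fact that the generic degrees $d_\chi(q)$ are known polynomials and that the top-degree behaviour of $\dim L^2(G/B)=\sum_{w\in W}q^{\ell(w)}$ is dominated by the single term $q^N$ coming from the longest element $w_0$ (all other Bruhat cells contribute strictly lower powers of $q$), while the Steinberg constituent alone already accounts for a $q^N$ contribution. Concretely, writing $\dim L^2(G/B)=q^N + O(q^{N-1})$ and exhibiting an irreducible constituent $V$ (the Steinberg representation) with $\dim V = q^N$, one gets
\[
\frac{\dim V}{\dim L^2(G/B)} = \frac{q^N}{q^N + O(q^{N-1})} \xrightarrow{q\to\infty} 1,
\]
which is exactly the claim. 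So the real content is identifying $V=\St$ and computing $\dim \St = q^N = |G/B| - O(q^{N-1})$; the Bruhat decomposition and the standard formula $\dim\St = q^N$ for finite groups of Lie type make this transparent, and I would cite the relevant representation theory (Steinberg, Curtis) rather than rederive it.
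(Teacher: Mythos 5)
Your proof is correct and follows essentially the same route as the paper: both identify the Steinberg representation as the large constituent, use $\dim\St=q^{N}$ (the order of a Sylow $p$-subgroup of $G$), and compare with $\dim L^{2}\left(G/B\right)=\sum_{w\in W}q^{\ell\left(w\right)}=q^{N}+O\left(q^{N-1}\right)$ coming from the Bruhat decomposition. The only differences are cosmetic: your Hecke-algebra/generic-degree discussion is a detour you yourself discard in the final paragraph, and the paper makes explicit the reduction of the semisimple case to simple factors (taking the tensor product of the factors' Steinberg representations), a step you implicitly delegate to the cited general theory of finite groups with a split $BN$-pair.
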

\begin{proof}
Let $p=\chara \FF_{q}$.
Assume first that $\utilde{G}$ is a simple Chevalley group.
Denote $r=\rank\left(\utilde{G}\right)$ and write $R$ for the number of positive roots of $G$.
The Steinberg representation $\St$ of $G$ is a subrepresentation of $L^2\left(G/B\right)$
(see Section 1.3 of \cite{humphreys87}).
By Theorem 6.4.7(ii) of \cite{carter}, $\dim\St$ is equal to the cardinality of the $p$-Sylow subgroup $S_{p}$
of $G$. By Lemma 54 and its corollary in \cite{steinberg-notes}, $\left|S_{p}\right|=q^R$,
$\left|B\right|=q^R\cdot (q-1)^r$ and 
\begin{align}\label{eq:size-of-G}
\left|G\right|&=q^{R}\left(q-1\right)^{r}\cdot\sum_{w\in W}q^{N\left(w\right)}
\end{align}
where the sum runs over the elements
of the Weyl group $W$ associated with $\utilde{G}$, and $N\left(w\right)$
is the number of positive roots taken by $w$ to negative roots.
Exactly one element $w_{0}$ of $W$, the longest element, takes
all positive roots to the negative ones (\cite{humphreys}, Section 1.8). Hence, the sum in
Equation (\ref{eq:size-of-G}) is equal to
\[
q^{R}+\sum_{w_{0}\neq w\in W}q^{N\left(w\right)},\qquad N\left(w\right)<R\,\,\text{.}
\]
This sum is the index of $B$ in $G$,
which is equal to $\dim L^{2}\left(G/B\right)$.
Hence, $\lim_{q\rightarrow\infty}\frac{\dim \St}{\dim L^{2}\left(G/B\right)}=1$.

Now, consider the general case where $\utilde{G}$ is a semisimple Chevalley group.
We can assume that $\utilde{G}=\prod_{i=1}^{s}\utilde{G}_{i}$ with $\utilde{G}_{i}$
simple. Hence, $\utilde{G}\left(\FF_{q}\right)/\utilde{B}\left(\FF_{q}\right)=\prod_{i=1}^{s}\utilde{G}_{i}\left(\FF_{q}\right)/\utilde{B}_{i}\left(\FF_{q}\right)$
and the tensor product of the Steinberg representations of the $\utilde{G}_{i}\left(\FF_{q}\right)$
is an irreducible subrepresentation of $L^{2}\left(G/B\right)$ of
almost full dimension by what we proved in the case of a simple Chevalley group.
\end{proof}
In the notation of the claim, the $G$-set $G/B$ can be identified
with the set $\conj_{G}\left(B\right)$ of conjugates of $B$ in $G$
since $B$ is self-normalizing in $G$.

\begin{proof}
[Proof of Theorem \ref{thm:appendix-good-representations}]
Assume W.L.O.G. that $\Gamma$
is an infinite linear group which is not virtually
solvable. Dividing its Zarsiki closure $H$ by the solvable radical,
we can assume that $H$ is non-trivial and semisimple (though maybe
not connected).
Moreover, we can assume (cf. \cite{LV}, Lemma 3.6) that $H^0$, the
identity component of $H$, is simply connected, and $H^0$ is
of the form $\prod_{i=1}^{s}\utilde{G}_i$ where $\utilde{G}_i$
are simple Chevalley groups.

By Section 4 of \cite{LaLu}, we can replace $\Gamma$ with a specialization
of $\Gamma$, and assume that $\Gamma\subset\GL_{n}\left(k\right)$,
where $k$ is a global field, and still the Zariski closure of $\Gamma$ 
is a $k$-algebraic group which is isomorphic to $H$ over the algebraic closure $\bar{k}$ of $k$, and we replace $H$ by this new Zariski closure.
As $\Gamma$ is finitely-generated, it lies
in $H\left(\calO_{S}\right)$ where $\calO$ is the ring of integers
of $k$ and $\calO_{S}$ is its localization
at a finite set $S$ of primes.

We now apply Nori-Weisfeiler strong approximation
(cf. \cite{lubotzky-segal},
Window: Strong approximation for linear groups)
to $\Gamma^{0}=H^{0}\cap\Gamma$
to deduce that for almost every prime ideal $\frakp$ of $\calO_{S}$,
$\Gamma^{0}$ is mapped onto $H^0\left(\calO_{S}/\frakp\right)$.
Moreover, applying the Chebotarev Density Theorem
(as in \cite{lubotzky-nikolov}, Section 4),
we deduce that if $\utilde{G}$ is the Chevalley (split) form of $H^0$, then
for infinitely many $\frakp$ (in fact, a subset of positive density),
$H^0\left(\calO_{S}/\frakp\right)\cong \utilde{G}\left(\calO_{S}/\frakp\right)$.
To summarize, $\Gamma^0$ sujects onto $G=\utilde{G}\left(\FF_q\right)$ for infinitely
many finite fields $\FF_q$. Fixing one such field $\FF_q$ and letting $B$ the the Borel
subgroup of $G$, the action $\Gamma^0\curvearrowright \conj_G\left(B\right)$ extends to an
action $\Gamma\curvearrowright \conj_G\left(B\right)$ since $\Gamma^0$ is a normal
subgroup of $\Gamma$. The desired conclusion now follows from Proposition \ref{prop:appendix-chevalley}, the identification of $G/B$ with $\conj_G\left(B\right)$ and the fact that the largest dimension of a
subrepresentation of $L^2\left(G/B\right)$ does not decrease when extending the
$\Gamma^0$-representation to a $\Gamma$-representation.
\end{proof}

\begin{bibdiv}
\begin{biblist}

\bib{arzhantseva-paunescu}{article}{
      author={Arzhantseva, Goulnara},
      author={P\u{a}unescu, Liviu},
       title={Almost commuting permutations are near commuting permutations},
        date={2015},
        ISSN={0022-1236},
     journal={J. Funct. Anal.},
      volume={269},
      number={3},
       pages={745\ndash 757},
         url={https://doi.org/10.1016/j.jfa.2015.02.013},
}

\bib{BS}{article}{
	author={Baumslag, Gilbert},
	author={Solitar, Donald},
	title={Some two-generator one-relator non-{H}opfian groups},
	date={1962},
	ISSN={0002-9904},
	journal={Bull. Amer. Math. Soc.},
	volume={68},
	pages={199\ndash 201},
	url={https://doi.org/10.1090/S0002-9904-1962-10745-9},
}

\bib{BLtesting}{article}{
	author={Becker, Oren},
	author={Lubotzky, Alexander},
	title={Stability and testability: Equations in symmetric groups},
	year={in preparation},
}

\bib{blt}{article}{
      author={Becker, Oren},
      author={Lubotzky, Alexander},
      author={Thom, Andreas},
       title={Stability and invariant random subgroups},
        eprint={https://arxiv.org/abs/1801.08381},
}

\bib{BM}{article}{
	author={Becker, Oren},
	author={Mosheiff, Jonathan},
	title={Abelian groups are polynomially stable},
	eprint={https://arxiv.org/abs/1811.00578},
}

\bib{bhv}{book}{
      author={Bekka, Bachir},
      author={de~la Harpe, Pierre},
      author={Valette, Alain},
       title={Kazhdan's property ({T})},
      series={New Mathematical Monographs},
   publisher={Cambridge University Press, Cambridge},
        date={2008},
      volume={11},
        ISBN={978-0-521-88720-5},
         url={https://doi.org/10.1017/CBO9780511542749},
}

\bib{BKM}{article}{
	author={Bestvina, Mladen},
	author={Bux, Kai-Uwe},
	author={Margalit, Dan},
	title={Dimension of the {T}orelli group for {${\rm Out}(F_n)$}},
	date={2007},
	ISSN={0020-9910},
	journal={Invent. Math.},
	volume={170},
	number={1},
	pages={1\ndash 32},
	url={https://doi.org/10.1007/s00222-007-0055-0},
}

\bib{bhatia}{book}{
      author={Bhatia, Rajendra},
       title={Matrix analysis},
      series={Graduate Texts in Mathematics},
   publisher={Springer-Verlag, New York},
        date={1997},
      volume={169},
        ISBN={0-387-94846-5},
         url={https://doi.org/10.1007/978-1-4612-0653-8},
}

\bib{carter}{book}{
	author={Carter, Roger~W.},
	title={Finite groups of {L}ie type},
	series={Wiley Classics Library},
	publisher={John Wiley \& Sons, Ltd., Chichester},
	date={1993},
	ISBN={0-471-94109-3},
	note={Conjugacy classes and complex characters, Reprint of the 1985
		original, A Wiley-Interscience Publication},
}

\bib{dglt}{article}{
      author={De~Chiffre, Marcus},
      author={Glebsky, Lev},
      author={Lubotzky, Alexander},
      author={Thom, Andreas},
       title={Stability, cohomology vanishing, and non-approximable groups},
       eprint={https://arxiv.org/abs/1711.10238},
}

\bib{glebsky-rivera}{article}{
      author={Glebsky, Lev},
      author={Rivera, Luis~Manuel},
       title={Almost solutions of equations in permutations},
        date={2009},
        ISSN={1027-5487},
     journal={Taiwanese J. Math.},
      volume={13},
      number={2A},
       pages={493\ndash 500},
         url={https://doi.org/10.11650/twjm/1500405351},
}

\bib{gro87}{incollection}{
      author={Gromov, M.},
       title={Hyperbolic groups},
        date={1987},
   booktitle={Essays in group theory},
      series={Math. Sci. Res. Inst. Publ.},
      volume={8},
   publisher={Springer, New York},
       pages={75\ndash 263},
         url={https://doi.org/10.1007/978-1-4613-9586-7_3},
}

\bib{HS2}{article}{
	author={Hadwin, Don},
	author={Shulman, Tatiana},
	title={Stability of group relations under small {H}ilbert-{S}chmidt
		perturbations},
	date={2018},
	ISSN={0022-1236},
	journal={J. Funct. Anal.},
	volume={275},
	number={4},
	pages={761\ndash 792},
	url={https://doi.org/10.1016/j.jfa.2018.05.006},
}

\bib{hrv}{article}{
	author={de~la Harpe, Pierre},
	author={Robertson, A.~Guyan},
	author={Valette, Alain},
	title={On the spectrum of the sum of generators for a finitely generated
		group},
	date={1993},
	ISSN={0021-2172},
	journal={Israel J. Math.},
	volume={81},
	number={1-2},
	pages={65\ndash 96},
	url={https://doi.org/10.1007/BF02761298},
}

\bib{horn-johnson-1994}{book}{
      author={Horn, Roger~A.},
      author={Johnson, Charles~R.},
       title={Topics in matrix analysis},
   publisher={Cambridge University Press, Cambridge},
        date={1994},
        ISBN={0-521-46713-6},
        note={Corrected reprint of the 1991 original},
}

\bib{humphreys87}{article}{
	author={Humphreys, J.~E.},
	title={The {S}teinberg representation},
	date={1987},
	ISSN={0273-0979},
	journal={Bull. Amer. Math. Soc. (N.S.)},
	volume={16},
	number={2},
	pages={247\ndash 263},
	url={https://doi.org/10.1090/S0273-0979-1987-15512-1},
}

\bib{humphreys}{book}{
	author={Humphreys, James~E.},
	title={Reflection groups and {C}oxeter groups},
	series={Cambridge Studies in Advanced Mathematics},
	publisher={Cambridge University Press, Cambridge},
	date={1990},
	volume={29},
	ISBN={0-521-37510-X},
	url={https://doi.org/10.1017/CBO9780511623646},
}

\bib{johnson}{article}{
	author={Johnson, Dennis},
	title={The structure of the {T}orelli group. {I}. {A} finite set of
		generators for {$\mathcal{I}$}},
	date={1983},
	ISSN={0003-486X},
	journal={Ann. of Math. (2)},
	volume={118},
	number={3},
	pages={423\ndash 442},
	url={https://doi.org/10.2307/2006977},
}

\bib{kaz67}{article}{
      author={Ka\v{z}dan, D.~A.},
       title={On the connection of the dual space of a group with the structure
  of its closed subgroups},
        date={1967},
        ISSN={0374-1990},
     journal={Funkcional. Anal. i Prilo\v zen.},
      volume={1},
       pages={71\ndash 74},
}

\bib{kos75}{incollection}{
      author={Kostant, Bertram},
       title={On the existence and irreducibility of certain series of
  representations},
        date={1975},
   booktitle={Lie groups and their representations ({P}roc. {S}ummer {S}chool,
  {B}olyai {J}\'{a}nos {M}ath. {S}oc., {B}udapest, 1971)},
   publisher={Halsted, New York},
       pages={231\ndash 329},
}

\bib{Kroph}{article}{
	author={Kropholler, P.~H.},
	title={Baumslag-{S}olitar groups and some other groups of cohomological
		dimension two},
	date={1990},
	ISSN={0010-2571},
	journal={Comment. Math. Helv.},
	volume={65},
	number={4},
	pages={547\ndash 558},
	url={https://doi.org/10.1007/BF02566625},
}

\bib{LaLu}{incollection}{
	author={Larsen, Michael},
	author={Lubotzky, Alexander},
	title={Normal subgroup growth of linear groups: the
		{$(G_2,F_4,E_8)$}-theorem},
	date={2004},
	booktitle={Algebraic groups and arithmetic},
	publisher={Tata Inst. Fund. Res., Mumbai},
	pages={441\ndash 468},
}

\bib{lazarovich-levit-minsky}{article}{
	author={Lazarovich, Nir},
	author={Levit, Arie},
	author={Minsky, Yair},
	title={Surface groups are flexibly stable},
	eprint={https://arxiv.org/abs/1901.07182},
}

\bib{lubotzky-expanders}{book}{
      author={Lubotzky, Alexander},
       title={Discrete groups, expanding graphs and invariant measures},
      series={Progress in Mathematics},
   publisher={Birkh\"auser Verlag, Basel},
      volume={125},
        ISBN={3-7643-5075-X},
         url={https://doi.org/10.1007/978-3-0346-0332-4},
        note={With an appendix by Jonathan D. Rogawski},
}

\bib{lubotzky-nikolov}{article}{
	author={Lubotzky, Alexander},
	author={Nikolov, Nikolay},
	title={Subgroup growth of lattices in semisimple {L}ie groups},
	date={2004},
	ISSN={0001-5962},
	journal={Acta Math.},
	volume={193},
	number={1},
	pages={105\ndash 139},
	url={https://doi.org/10.1007/BF02392552},
}

\bib{lubotzky-oppenheim}{article}{
      author={Lubotzky, Alexander},
      author={Oppenheim, Izhar},
       title={Non p-norm approximated groups},
         eprint={https://arxiv.org/abs/1807.06790},
}

\bib{lubotzky-segal}{book}{
	author={Lubotzky, Alexander},
	author={Segal, Dan},
	title={Subgroup growth},
	series={Progress in Mathematics},
	publisher={Birkh\"{a}user Verlag, Basel},
	date={2003},
	volume={212},
	ISBN={3-7643-6989-2},
	url={https://doi.org/10.1007/978-3-0348-8965-0},
}

\bib{LV}{article}{
	author={Lubotzky, Alexander},
	author={Venkataramana, T.N.},
	title={The congruence topology, Grothendieck duality and thin groups},
	eprint={https://arxiv.org/abs/1709.06179},
}

\bib{LW}{incollection}{
      author={Lubotzky, A.},
      author={Weiss, B.},
       title={Groups and expanders},
        date={1993},
   booktitle={Expanding graphs ({P}rinceton, {NJ}, 1992)},
      series={DIMACS Ser. Discrete Math. Theoret. Comput. Sci.},
      volume={10},
   publisher={Amer. Math. Soc., Providence, RI},
       pages={95\ndash 109},
}

\bib{lz}{article}{
      author={Lubotzky, Alexander},
      author={Zimmer, Robert~J.},
       title={Variants of {K}azhdan's property for subgroups of semisimple
  groups},
        date={1989},
        ISSN={0021-2172},
     journal={Israel J. Math.},
      volume={66},
      number={1-3},
       pages={289\ndash 299},
         url={https://doi.org/10.1007/BF02765899},
}

\bib{magnus}{article}{
	author={Magnus, Wilhelm},
	title={\"{U}ber {$n$}-dimensionale {G}ittertransformationen},
	date={1935},
	ISSN={0001-5962},
	journal={Acta Math.},
	volume={64},
	number={1},
	pages={353\ndash 367},
	url={https://doi.org/10.1007/BF02545673},
}

\bib{rips82}{article}{
	author={Rips, E.},
	title={Subgroups of small cancellation groups},
	date={1982},
	ISSN={0024-6093},
	journal={Bull. London Math. Soc.},
	volume={14},
	number={1},
	pages={45\ndash 47},
	url={https://doi.org/10.1112/blms/14.1.45},
}

\bib{serre70}{article}{
      author={Serre, Jean-Pierre},
       title={Le probl\`eme des groupes de congruence pour {SL}2},
        date={1970},
        ISSN={0003-486X},
     journal={Ann. of Math. (2)},
      volume={92},
       pages={489\ndash 527},
         url={https://doi.org/10.2307/1970630},
}

\bib{serre}{book}{
      author={Serre, Jean-Pierre},
       title={Trees},
      series={Springer Monographs in Mathematics},
   publisher={Springer-Verlag, Berlin},
        date={2003},
        ISBN={3-540-44237-5},
        note={Translated from the French original by John Stillwell, Corrected
  2nd printing of the 1980 English translation},
}

\bib{steinberg-notes}{book}{
	author={Steinberg, Robert},
	title={Lectures on {C}hevalley groups},
	publisher={Yale University, New Haven, Conn.},
	date={1968},
	note={Notes prepared by John Faulkner and Robert Wilson},
}

\bib{thom-icm}{article}{
      author={Thom, Andreas},
       title={Finitary approximations of groups and their applications},
     journal={Proceedings of the ICM 2018, Rio de Janeiro},
}

\bib{wang}{article}{
      author={Wang, P.~S.},
       title={On isolated points in the dual spaces of locally compact groups},
        date={1975},
        ISSN={0025-5831},
     journal={Math. Ann.},
      volume={218},
      number={1},
       pages={19\ndash 34},
         url={https://doi.org/10.1007/BF01350065},
}

\bib{wasserman}{article}{
      author={Wassermann, Simon},
       title={{$C^*$}-algebras associated with groups with {K}azhdan's property
  {$T$}},
        date={1991},
        ISSN={0003-486X},
     journal={Ann. of Math. (2)},
      volume={134},
      number={2},
       pages={423\ndash 431},
         url={https://doi.org/10.2307/2944351},
}

\end{biblist}
\end{bibdiv}

\end{document}